\newcommand \datum {July 13, 2023}
\numberwithin{equation}{section}
\theoremstyle{plain}
 \newtheorem{theorem}{Theorem}[section]
 \newtheorem{lemma}[theorem]{Lemma}
 \newtheorem{proposition}[theorem]{Proposition}
 \newtheorem{corollary}[theorem]{Corollary}
\theoremstyle{definition}
 \newtheorem{definition}[theorem]{Definition}
 \newtheorem{remark}[theorem]{Remark}
 \newtheorem{convention}[theorem]{Convention}
\theoremstyle{remark}
 \newtheorem{category}{Category}
\newcommand \OT [1] {\textup{OT}(#1)}
\newcommand \EOT [1] {\textup{EOT}(#1)}
\newcommand \LEOT [1] {\textup{LEOT}(#1)}
\newcommand \REOT [1] {\textup{REOT}(#1)}
\newcommand \Str [1] {\textup{Str}(#1)}
\newcommand \TopInt [1] {\textup{TopInt}(#1)}
\newcommand \lsupp [1] {\textup{lsupp}(#1)}
\newcommand \rsupp [1] {\textup{rsupp}(#1)}
\newcommand \LBnd [1] {\textup{LBnd}(#1)}
\newcommand \RBnd [1] {\textup{RBnd}(#1)}
\newcommand \Bnd [1] {\textup{Bnd}(#1)}
\newcommand \Shk [1]  {\mathsf S_7^{(#1)}}
\newcommand \lcorner [1] {\textup{lc}(#1)}
\newcommand \rcorner [1] {\textup{rc}(#1)}
\newcommand \len [1] {\textup{len}(#1)}
\newcommand \con  {\textup{con}}
\newcommand \intv [1]{\mathfrak{#1}}
\newcommand \nn{\intv n}
\newcommand \pp{\intv p}
\newcommand \qq{\intv q}
\newcommand \rr{\intv r}
\newcommand \hh{\intv h}
\newcommand \NTube [1] {\textup{NumTube}(#1)}
\newcommand \pNTube [2] {\textup{NumTube}_{#1}(#2)}
\newcommand \ANTube [1] {\textup{NumTube}_{\textup{all}}(#1)}
\newcommand \INTube [1] {\textup{NumTube}_{\textup{internal}}(#1)}
\newcommand \Floor [1] {\textup{Floor}(#1)}
\newcommand \LFloor [1] {\textup{LFloor}(#1)}
\newcommand \RFloor [1] {\textup{RFloor}(#1)}
\newcommand \Roof [1] {\textup{Roof}(#1)}
\newcommand \LRoof [1] {\textup{LRoof}(#1)}
\newcommand \RRoof [1] {\textup{RRoof}(#1)}
\newcommand \FullRect [1] {\textup{FullRect}(#1)}
\renewcommand \phi{\varphi}
\newcommand \RR {\mathbb R}
\newcommand \Nwl[1] {\textup{Nwl}(#1)}
\newcommand \Nel[1] {\textup{Nel}(#1)}
\newcommand \Min[1] {\textup{Min}(#1)}
\newcommand \Foot [1] {\textup{Foot}(#1)}
\newcommand \Peak [1] {\textup{Peak}(#1)}
\newcommand \CircR [1] {\textup{CircR}(#1)}
\newcommand \UHCircR [1] {\textup{UHCircR}(#1)}
\newcommand \Jir [1] {\textup{J}(#1)} 
\newcommand \Mir [1] {\textup{M}(#1)}
\newcommand \Nplu {\mathbb N^+}
\newcommand \gideal {\mathord{\downarrow_{\textup{g}}}}
\newcommand \gfilter {\mathord{\uparrow_{\textup{g}}}}
\newcommand \ideal {\mathord\downarrow\kern0.5pt }
\newcommand \pideal[1] {\mathord\downarrow_{\kern-1.0pt #1}\kern0.5pt}
\newcommand \pfilter[1] {\mathord\uparrow_{\kern-1.0pt #1}\kern0.5pt}
\newcommand \filter[1]{\mathord\uparrow\kern0.5pt  #1}
\newcommand \GInt [1] {\textup{GInt}(#1)}
\newcommand \then {\Rightarrow}
\newcommand \tbdia {$\mathcal C_1$}
\newcommand \tuple [1] {(#1)}
\newcommand \pair [2] {\tuple{#1,#2}}
\newcommand \Enl [1] {\textup{Lit}(#1)}
\newcommand \Lamp[1] {\textup{Lamp}(#1)}
\newcommand \rhcircr {\boldsymbol\rho_{\textup{CircR}}}
\newcommand \rhfoot {\boldsymbol\rho_{\textup{foot}}}
\newcommand \rhotfoot {\boldsymbol\rho_{\textup{OTfoot}}}
\newcommand \rhotcr {\boldsymbol\rho_{\textup{OTCR}}}
\newcommand \Body [1] {\textup{Body}(#1)}
\DeclareMathOperator{\Con}{Con}
\newcommand{\tbf}{\textbf}
\newcommand{\set}[1]{\{#1\}}
\newcommand \red[1]{{\textcolor{red}{#1}\color{black}}}
\newcommand \semmi[1]{}
\begin{document}

\title[Reducing the lengths of slim semimodular lattices]
{Reducing the lengths of slim planar semimodular lattices without changing their congruence lattices}

\author[G.\ Cz\'edli]{G\'abor Cz\'edli}
\email{czedli@math.u-szeged.hu}
\urladdr{http://www.math.u-szeged.hu/~czedli/}
\address{University of Szeged, Bolyai Institute. 
Szeged, Aradi v\'ertan\'uk tere 1, HUNGARY 6720}

\begin{abstract} 
Following  G.\ Gr\"atzer and E.\ Knapp (2007), a \emph{slim semimodular lattice}, \emph{SPS lattice} for short, 
is a finite planar semimodular lattice having no  $M_3$ as a sublattice. An SPS lattice is a \emph{slim rectangular lattice} if it has exactly two doubly irreducible elements and these two elements are complements of each other. A finite poset $P$ is said to be \emph{JConSPS-representable} if there is an SPS lattice $L$  such that $P$ is isomorphic to the poset $\Jir{\Con L}$ of join-irreducible congruences of $L$. 
We prove that if $1<n\in\mathbb N$ and  $P$ is an $n$-element JConSPS-representable poset, then there exists a slim rectangular lattice $L$ such that $\Jir{\Con L}\cong P$, the length of $L$ is at most $2n^2$, and $|L|\leq  4n^4$. This offers an algorithm to decide whether a finite poset $P$ is JConSPS-representable (or a finite distributive lattice is ``ConSPS-representable"). This algorithm is slow as  G.\ Cz\'edli, T.\ D\'ek\'any, G.\ Gyenizse, and J.\ Kulin proved in 2016 that there are asymptotically $(k-2)!\cdot e^2/2$ many slim rectangular lattices of a given length $k$, 
where $e$ is the famous constant $\approx 2.71828$.
The known properties and constructions of JConSPS-representable posets can accelerate the algorithm; we present a new construction.
\end{abstract}

\thanks{This research was supported by the National Research, Development and Innovation Fund of Hungary under funding scheme K 138892.
}

\subjclass {06C10\hfill{\red{\tbf{\datum}}}}

\dedicatory{Dedicated to the memory of my paternal grandfather,  J\'ozsef}

\keywords{Slim rectangular lattice, slim semimodular lattice, planar semimodular
lattice, congruence lattice, lattice congruence, lamp, \tbdia-diagram}

\maketitle

\section{Introduction} Following Gr\"atzer and E.\ Knapp \cite{GKn-I}, a \emph{slim planar semimodular lattice}, \emph{SPS lattice} for short, is a finite planar (upper) semimodular lattice having no  $M_3$ as a sublattice. By  Gr\"atzer and E.\ Knapp \cite{GKnapp-III}, an  SPS lattice $L$ is a \emph{slim rectangular lattice} if it has exactly two doubly irreducible elements (denoted by $\lcorner L$ and $\rcorner L$ and called the \emph{left corner } and the \emph{right corner} of $L$) and these two elements are complements of each other. As usual, $\Jir L$, the set of join-irreducible elements is $\{x\in L:x$ has exactly one lower cover$\}$; $\Mir L$ is defined dually. As in Cz\'edli and Schmidt \cite{CzGSch-jordanh}, a lattice $L$ is \emph{slim} if it is finite and $\Jir L$ is the union of two chains.
We know from  Cz\'edli and Schmidt \cite[Lemma 2.3]{CzGSch-jordanh} that for a lattice $L$,
\begin{equation}
\text{$L$ is an SPS lattice $\iff$ $L$ is a slim semimodular lattice.}
\label{eq:sZbjmRgRpbW}
\end{equation}
In the paper as in many earlier ones, ``slim semimodular" \emph{means the same as} ``slim planar semimodular'', that is, ``SPS''.
A finite lattice $D$ is \emph{ConSPS-representable}  if it is  isomorphic to the congruence lattice $\Con L$ of an SPS lattice $L$. Similarly, a finite poset $P$ is \emph{JConSPS-representable} if $P\cong \Jir{\Con L}$ for an SPS lattice $L$.

Due to (the historical) Section 2 in  Cz\'edli and Kurusa \cite{CzGKA19}, the surveying part of this section is reduced to a few comments. The  four dozen element list\footnote{See \ \url{ http://www.math.u-szeged.hu/~czedli/m/listak/publ-psml.pdf} \ for an update.} in the  Appendix of Cz\'edli \texttt{https://arxiv.org/abs/2107.10202} shows that since 2007, SPS lattices form an intensively investigated class of lattices. 
In addition to their impact on and connection with geometry, group theory, and combinatorics as explained in \cite{CzGKA19}, 
SPS lattices have connections with finite model theory, see  Cz\'edli \cite{CzGnonfnx}. 
SPS lattices (or their duals) are particular cases of some other classes of lattices and combinatorial structures; indeed, they are also
join-distributive lattices, meet-semidistributive lattices, and 
subspace lattices of antimatroids (or convex geometries); see, for example, Cz\'edli \cite{CzG:Coord}.  Thus, benefiting from the fact that SPS lattices are well understood by means of several structure theorems and representation theorems, the study of these lattices can lead to discoveries for larger classes of lattices and related structures; for example, see Adaricheva and Cz\'edli \cite{KiraCzG}. Actually, even purely geometric papers are in connection with SPS lattices; see, for example, Cz\'edli and Kurusa \cite{CzGKA19}. By Gr\"atzer and Knapp \cite[Section 3]{GKn-I}, the theory of planar semimodular lattices is satisfactorily reduced to that of SPS lattices. So last (and least) we note that 
there are some problems where it could be possible or it was possible to prove more for planar semimodular or SPS lattice than for all finite lattices; see, e.g., Ahmed and Horv\'ath \cite{DelbKHE} and  Cz\'edli and Schmidt \cite{CzGSchTfrankl}.

Within lattice theory, the interest in SPS lattices is mainly fueled by  Gr\"atzer \cite[Problem 1]{gG-cong-fork-ext} asking for a characterization of ConSPS-representable distributive lattices. Note that  \cite[Problem 1]{gG-cong-fork-ext} is motivated by 
the fact that $M_3$ sublattices played a key role in Gr\"atzer, Lakser, and Schmidt \cite{GrLSchm} representing \emph{all} finite distributive lattices by congruence lattices of planar semimodular lattices, whereby it was natural to ask what happens when $M_3$ sublattices are not permitted, that is, when SPS rather than planar semimodular lattices are used.   

Since ConSPS-representability implies distributivity and a finite distributive lattice $D$ is perfectly described by $\Jir D$,
a satisfactory characterization of JConSPS-representable posets would yield a characterization of ConSPS-representable lattices. However, the two representability problems are not the same in the aspect of axiomatizability. Indeed, Cz\'edli \cite{CzGnonfnx}  proves that JConSPS-representable posets cannot be described by finitely many axioms in the first-order language of \emph{finite} posets but it is still unknown whether ConSPS-representable lattices have a finite first-order axiomatization in the class of \emph{finite} lattices.  Note that the class of JConSPS representable posets has many known properties and is closed under some constructions; see Remark \ref{rem:rjmdnMgzKrrGv} for bibliographic details. However, we do not know whether these properties and constructions themselves 
 offer an algorithm to decide whether a poset is JConSPS-representable or not.  Indeed, since we do not know whether  the collection of the above-mentioned known properties and constructions is sound and even a very large SPS lattice can  JConSPS-represent a small poset\footnote{E.g., 
with $\Shk 1, \Shk 2,\dots$ in Figure \ref{fig-enpn}, we have that  $|\Jir{\Con{\Shk k}}|=5$ for all (large) $k$.} $P$, it is not clear at first sight whether it suffices to check $\Jir{\Con L}$ for finitely many $L$.

\section{Goal}
In Theorem \ref{thm:est}, we give an upper bound on the length of the shortest slim rectangular lattices $L$ that JConSPS-represents a  given JConSPS-representable finite poset $P$. 
Therefore, there exists an algorithm to decide if a finite poset $P$ is  JConSPS-representable; indeed, we know from  Cz\'edli, D\'ek\'any, Gyenizse, and Kulin \cite{CzDGyK} that up to isomorphism, 
\begin{equation}\left.
\parbox{10.7cm}{the number of slim rectangular lattices of a given length $k$ is asymptotically $(k-2)!\cdot e^2/2$, where $e=\lim_{n\to\infty}(1+1/n)^n\approx  2.71828$.}\,\,\right\}
\label{eq:gNsztdLnkGsllSskl}
\end{equation}
By \eqref{eq:gNsztdLnkGsllSskl}, there are only finitely many slim rectangular lattices up to a given length. Thus, Theorem \ref{thm:est} implies the existence of an algorithm that for each finite poset $P$ decides whether
$P$ is JConSPS-representable. Moreover, if $P$ is such and $|P|>1$, then the algorithm constructs a slim  rectangular  lattice $L$  such that  $P\cong\Jir{\Con L}$.
Remark \ref{rem:rjmdnMgzKrrGv} points out that known properties and constructions, including the multifork extension construction, make the algorithm faster. Proposition \ref{prop-dbl}
presents a new construction that extends a JConSPS-representable poset to a larger one.

\section{Concepts, terminology, and tools from earlier papers}

As in  Cz\'edli \cite{CzGlamps} and thereafter, to avoid
subscripts of subscripts, the bottom $0_I$ and the top $1_I$ of an interval $I$ are denoted by  $\Foot I$ and $\Peak I$, respectively. 
For $u$ in a lattice $L$, $\ideal u=\pideal L u:=\set{x\in L:x\leq u}$ and $\filter u=\pfilter L u:=\set{x\in L:x\geq u}$.
Edges in a planar diagram are \emph{straight} line segments denoting prime intervals $\pp=[\Foot\pp,\Peak \pp]$. A usual coordinate system of the plane is always fixed.
Edges (or lines) parallel to $(1,1)$ or $(1,-1)$ are of \emph{normal slopes}. Edges  parallel to $(1,t)$ for some $t\in\RR$ with $|t|>1$ and vertical edges are said to be \emph{precipitous}. 

Going after Gr\"atzer and Knapp \cite{GKn-I} and \cite{GKnapp-III},  let $L^\sharp$ be a planar diagram of a slim rectangular lattice $L$. The \emph{left boundary chain} and the \emph{right boundary chain} of $L^\sharp$   are denoted by $\LBnd L$ and $\RBnd L$, respectively. (Actually, $\LBnd{L^\sharp}$ and $\RBnd{L^\sharp}$ would be more precise but we always fix $L^\sharp$ in a way to be defined soon.
This comment applies for several other concepts we are going to define.) The \emph{boundary} of $L$ is  $\Bnd L=\LBnd L\cup\RBnd L$. The elements of $\Bnd L$ and those of $L\setminus\Bnd L$ are called \emph{boundary elements} and \emph{internal elements}. For example, the already mentioned corners are boundary elements: $\lcorner L\in\LBnd L$ and $\rcorner L\in\RBnd L$. For $x\in L$, the \emph{left support} and the \emph{right support} of $x$ are\footnote{The third equality in \eqref{eq:mndLlspSgrnKpL} follows from \eqref{eq:sZbjmRgRpbW} and
 Gr\"atzer and Knapp \cite[Lemmas 3 and 4]{GKnapp-III}.}  
\begin{equation}\left.
\parbox{9cm}{ 
$\lsupp x:=x\wedge \lcorner L$ and $\rsupp x:=x\wedge \rcorner L$. Note that $x=\lsupp x\vee\rsupp x$,  $\lsupp x$ is on the \emph{lower left boundary} $\pideal L{\lcorner L}$, $\pideal L{\lcorner L}\subseteq \LBnd L$, 
$\rsupp x$ is on the \emph{lower right boundary} $\pideal L{\lcorner L}$, and $\pideal L{\rcorner L}\subseteq \RBnd L$.
}\,\,\right\}
\label{eq:mndLlspSgrnKpL} 
\end{equation}
The \emph{upper left boundary} and the \emph{upper right boundary} of $L$ are the principal filters $\pfilter L{\lcorner L}$ and $\pfilter L{\rcorner L}$; note that $\pfilter L{\lcorner L}\subseteq \LBnd L$ and $\pfilter L{\rcorner L}\subseteq \RBnd L$. 

Recall from Cz\'edli \cite[Definition 2.1]{CzGlamps} (as Cz\'edli \cite{CzGdiagrectext} would be too general here) that the diagram $L^\sharp$ of $L$ is a \emph{\tbdia-diagram} if for every edge $\pp=[\Foot\pp, \Peak\pp]$ of the diagram, $\pp$ is either precipitous or it is of a normal slope and, furthermore, $\pp$ is precipitous $\iff$ $\Foot\pp$ is an internal meet-irreducible element of $L$.

\begin{convention}\label{conv:tbdD} 
Together with each slim rectangular lattice occurring in the paper, a
\emph{\tbdia-diagram of our lattice is fixed}. Moreover, even if we do not say it all the time, whenever we construct a lattice (like a sublattice or a larger lattice), then \emph{we always construct its fixed \tbdia-diagram as well}. In notation, we \emph{rarely distinguish} a slim rectangular lattice from its \tbdia-diagram.
\end{convention}

Complying with Convention \ref{conv:tbdD}, all lattice diagrams in this paper are \tbdia-diagrams. 
Let $L$ denote a slim rectangular lattice. Note in advance that quite often,
\begin{equation}
\text{we do not distinguish between lattice theoretic and geometric objects.}
\label{eq:mNdkRkhmzsphvlL}
\end{equation}
If $a<b\in L$ and $C_1,C_2$ are maximal chains of the interval $[a,b]$ such that $C_1\cap C_2=\set{a,b}$ and all elements $x$ of $C_1$ are on the left of $C_2$ (including the possibility of $x\in C_2$), then the elements $[a,b]$ that are  simultaneously on the right of $C_1$ and on the left of $C_2$ form
a so-called \emph{lattice region}; see Kelly and Rival \cite{KR75}
for a more exact definition.
The corresponding geometric area, which is bordered by $C_1$ and $C_2$, is a \emph{geometric region}. Note that whenever we define a geometric area (like a geometric region) or a line segment, then (unless otherwise explicitly stated) it contains its boundaries, that is, it  is \emph{topologically closed}.
Minimal non-chain regions are \emph{cells}. If a cell contains exactly four lattice elements, then it is a \emph{$4$-cell}. Note that 4-cells are cover-preserving boolean sublattices with 4 elements but, as $M_3$ exemplifies, not conversely. 
A \emph{$4$-cell lattice} is a planar lattice in which all cells are 4-cells (in a fixed planar diagram). 
Gr\"atzer and Knapp {\cite[Lemmas 4 and 5]{GKn-I} and \cite{GKnapp-III}} proved that for a planar lattice $L$ (which is finite by definition),
\begin{equation}\left.
\parbox{11cm}{if  $L$ is a $4$-cell lattice, no two distinct $4$-cells have the same bottom,  $L$ has exactly two doubly irreducible elements, and these two elements are complementary, then $L$ is a slim rectangular lattice. Conversely, every slim rectangular lattice is a 4-cell lattice with these properties.}\,\,\right\}
\label{eq:mnHhwlCh}
\end{equation}

\begin{figure}[ht] \centerline{ \includegraphics[scale=0.98]{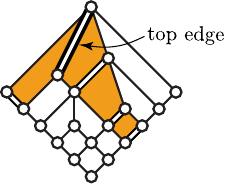}} \caption{A trajectory}\label{fig-loc}
\end{figure}

On the set of prime intervals (i.e., edges) of a slim rectangular lattice $L$, let $\tau$ be the smallest equivalence relation that collapses the opposite sides of every 4-cell. 
As in Cz\'edli and Schmidt \cite{CzGSch-jordanh}, the blocks of $\tau$ are called \emph{trajectories}; e.g., the double-lined edges form a trajectory in Figure \ref{fig-loc}.
Going from left to right, a trajectory does not branch out and neither it does so backwards. 
The unique edge $\pp$ of a trajectory such that $\Foot\pp\in\Mir L$ is the \emph{top edge} of the trajectory.
The \emph{ascending part} of a trajectory consists of the top edge and all of its edges left to the top edge; the \emph{descending part} is defined left-right symmetrically. Any two consecutive edges of a trajectory form a  \emph{$4$-cell of a the trajectory}; they are orange-filled in the figure.

Given a 4-cell $H$ of $L$ and a positive integer $k\in\Nplu$, we obtain the $k$-fold \emph{multifork extension} of $L$ at $H$ by changing $H$ to a copy of $\Shk k$ and proceeding to the southeast and to the southwest to preserve semimodularity. For the exact definition, see Cz\'edli \cite{CzG:pExttCol}, where this construction was introduced, or see Figure \ref{fig-enpn}, where the construction is illustrated by performing a 1-fold multifork extension at $H_1$ of $L_0$ to obtain $L_1$ and performing a 3-fold multifork extension at $H_2$ of $L_1$ to obtain $L_2$. (To save space, our figures are multi-purpose figures;  some ingredients of Figure \ref{fig-enpn} will be explained later.) Note in advance that the thick edges of our lattice diagrams will be called neon tubes.
Note also that 1-fold multifork extensions are also called \emph{fork extensions}; see Cz\'edli and Schmidt \cite{czgscht-visual}; in this case the new elements form a so-called \emph{fork} in the new lattice; see \eqref{eq:lcnFrhvLmTrcJflSr} later.

A \emph{grid} is (the fixed \tbdia-diagram of)  the direct product of two non-singleton finite chains. 
A 4-cell $H$ of $L$ is a \emph{distributive $4$-cell} if the principal ideal $\pideal L{\Peak H}$ is a distributive lattice. By  Cz\'edli and Schmidt \cite{czgscht-visual} and the following lemma, 
\begin{equation}
\text{if $H$ is a distributive $4$-cell of $L$, then   $\pideal L{\Peak H}$ is a grid.}
\label{eq:ltKdlvsTsmnxThj}
\end{equation}
The most useful structure theorem of slim rectangular lattices is the following. 

\begin{lemma}[Multifork Sequence Lemma {\cite[Theorem 3.7]{CzG:pExttCol}}]
\label{lemma:nmtfrkstZ}
For each slim rectangular lattice $L$, there exist positive integers $m_1,\dots, m_k$, a sequence  $L_0$, $L_1$, \dots, $L_k$ of slim rectangular lattices, and a  distributive $4$-cell 
$H_i$ of $L_{i-1}$ for $i\in\set{1,\dots,k}$ such that $L_0$ is a grid, $L_k=L$, and $L_i$ is obtained from $L_{i-1}$ by performing an $m_i$-fold multifork extension at $H_i$ for $i\in\set{1,\dots,k}$. Furthermore, any lattice obtained in this way from a grid is a slim rectangular lattice.
\end{lemma}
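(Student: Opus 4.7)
The plan is to prove both parts of the lemma by induction on $|L|$. For the ``furthermore'' clause (the converse direction), I would first observe that a grid is trivially a slim rectangular lattice, and then check inductively that each multifork extension at a distributive $4$-cell preserves the characterizing properties assembled in \eqref{eq:mnHhwlCh}: the construction is local to the $4$-cell $H_i$ (together with obligatory cascades of elements to the southwest and the southeast), so it keeps the lattice a $4$-cell lattice with pairwise distinct $4$-cell bottoms, keeps the corners doubly irreducible and complementary, and preserves planarity. Semimodularity and the absence of a cover-preserving $M_3$ are precisely what the southwest/southeast propagation of new elements is designed to secure. These verifications are essentially diagrammatic.

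For the forward direction, I proceed by induction on $|L|$. The base case is when $L$ is itself a grid; then take $k=0$ and there is nothing to do. For the inductive step, assume $L$ is not a grid. By Definition~\ref{def:mndLlspSgrnKpL}(D), the \tbdia-diagram of a grid has no precipitous edges, so $L$ must contain at least one internal meet-irreducible element $m$, equivalently, a precipitous edge with $m$ as its lower vertex. The strategy is to identify a ``last-inserted'' multifork in $L$, peel it off, obtain a smaller slim rectangular lattice $L'$, and recover $L$ from $L'$ as a single multifork extension at a distributive $4$-cell $H$.

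Concretely, I would examine the trajectories of $L$ whose top edge (neon tube) has an internal meet-irreducible foot, and pick one that is extremal in the planar layout, say, one whose top edge is geometrically highest and, among those, rightmost. The corresponding foot should sit at the tip of a subconfiguration isomorphic to $\Shk{m_k}$ for some $m_k\geq 1$, together with its forced cascades to the southwest and southeast. The claim is then that deleting exactly these elements collapses the whole configuration back into a single $4$-cell $H$ inside a planar diagram $L'$ with $|L'|<|L|$. One must verify: (a) $L'$ is still a $4$-cell lattice with distinct $4$-cell bottoms and two complementary doubly irreducible corners, hence a slim rectangular lattice by \eqref{eq:mnHhwlCh}; and (b) $H$ is a distributive $4$-cell of $L'$, i.e., $\pideal{L'}{\Peak H}$ is a grid in the sense of \eqref{eq:ltKdlvsTsmnxThj}, so that no other multifork survives strictly inside $H$ below its peak.

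The main obstacle, I expect, is step (b): making the notion of ``last-inserted multifork'' precise and showing that the chosen configuration really can be peeled off in one shot without leaving residual non-distributive structure beneath $\Peak H$. This is a structural/geometric argument about how trajectories, neon tubes, and precipitous edges are arranged in a \tbdia-diagram, and it leans on the identification \eqref{eq:mNdkRkhmzsphvlL} of lattice-theoretic and geometric objects as well as on the normalization forced by Convention~\ref{conv:tbdD}. Once the peel-off is justified, the induction closes: applying the inductive hypothesis to $L'$ yields a grid $L_0$ and a sequence $L_0,L_1,\dots,L_{k-1}=L'$ of multifork extensions, and appending the final extension $L'\to L$ with parameters $H$ and $m_k$ delivers the sequence demanded for $L$.
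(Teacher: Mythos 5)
The paper does not prove this lemma; it is quoted as Theorem~3.7 of Cz\'edli, \emph{Patch extensions and trajectory colorings of slim rectangular lattices} \cite{CzG:pExttCol}, and is used here as a black box. There is therefore no in-paper proof to compare yours against, and I can only assess the proposal on its own terms.

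Your outline is the expected one, and the ``furthermore'' direction is indeed a routine check that multifork extensions at distributive $4$-cells preserve the characterization in \eqref{eq:mnHhwlCh}. But the gap you flag in step~(b) is real, and the selection rule you propose does not close it --- if anything it points the wrong way. For the collapsed $4$-cell $H$ to be a \emph{distributive} $4$-cell of $L'$, the ideal $\pideal{L'}{\Peak H}$ must be a grid, which is a constraint on what lies \emph{below} $\Peak I$, not on how high $\Peak I$ sits. A geometrically high internal neon tube has, if anything, more of the lattice beneath it and hence more opportunity for other internal lamps to persist under $\Peak H$ after the deletion, in which case $H$ is not distributive in $L'$ and the induction stalls. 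The extremality you need is order-theoretic, not metric: pick an internal lamp $I$ with the property that $\pideal{L}{\Peak I}$ contains no internal neon tube except those of $I$ itself --- equivalently, modulo a short argument via Lemma~\ref{lemma:vltfLm} and the equality $\rhfoot=\rhcircr$, pick a \emph{minimal} element of the intrinsically defined poset $(\Lamp L;\leq)$, so that no other lamp $K$ has $\CircR K\subseteq\Enl I$. Only with such a choice can you hope to show that deleting $I$'s fork(s) leaves a grid below the merged cell. And even granting the right choice, verifying that the deletion produces a slim rectangular diagram is not automatic; it requires the same kind of diagrammatic bookkeeping that the proofs of Lemmas~\ref{lemma:middllunt} and~\ref{lemma:twnTbs} in this paper carry out for closely related fork removals.
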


The system $(L_0, H_1,m_1, L_1,H_2,m_2,\dots, L_{k-1}, H_k,m_k, L=L_k)$ with components as above is the \emph{multifork sequence of $L$}; it is not necessarily unique but we always fix one. (Note, however, that $k$ is unique.)

\begin{definition}[Cz\'edli \cite{CzG-qdiagr}]\label{def:shfJlrGrDkndmn}Let $\nn$ be an edge on the upper boundary of the initial grid $L_0$. The union of the 4-cells of the trajectory containing $\nn$ is the \emph{original territory} of $\nn$; it is denoted by $\OT\nn$. When we obtain $L_i$ from $L_{i-1}$, then we add several new edges and exactly $m_i$ of these new edges have the same peak as $H$. Let $\nn$ be one of these new edges. In $L_i$, the union of the 4-cells of the trajectory containing $\nn$ is a geometric area; we call it the \emph{original territory} $\OT\nn$ of $\nn$ in $L$.  Note that we have defined $\OT\nn$  if and only if $\nn$ is an edge of the upper boundary or $\nn$ is a precipitous edge. 
If $\nn$ is an edge of the upper left boundary chain, then the \emph{essential part of the original territory}, denoted by  $\EOT\nn$, and the \emph{right essential part of the original territory}, denoted by $\REOT\nn$, of $\nn$ are $\OT \nn$ while the \emph{left essential part of the original territory}, denoted by $\LEOT\nn$, of $\nn$ is the empty set. 
Similarly, for $\nn$ on the right upper boundary, $\EOT \nn=\LEOT \nn:=\OT\nn$ and $\REOT\nn=\emptyset$. 
Next, let $\nn$ be a precipitous new edge of $L_i$ and denote by $T$ the trajectory of $L_i$ that contains $\nn$. The union of the 4-cells of $T$ that do not contain $\nn$ as an edge is the \emph{essential part $\EOT\nn$ of the original territory} of $\nn$;  it is a geometric area and the union of two (geometrically) connected subsets that are, in a self-explanatory manner, called the \emph{left essential part $\LEOT\nn$} and the \emph{right essential part  $\REOT\nn$ of the original territory} of $\nn$. 
\semmi{(The concept of $\EOT  \nn$ was introduced in Cz\'edli \cite{CzG-qdiagr}.)}
\end{definition}

For examples of $\OT\nn$, \dots, $\REOT\nn$, see Figures \ref{fig-enpn}, \ref{fig-knpn}, and \ref{fig-nn}. Even though their  definition relies on $L_0$ or $L_i$, we also use these concepts in $L$, where  $\OT\nn$, \dots, $\REOT\nn$ have no connection with the trajectory containing $\nn$ in general; this is exemplified by $\nn_1$ and $\nn_2$ in $L'$ (but not in $L$) of Figure \ref{fig-knpn}. 
\eqref{eq:ltKdlvsTsmnxThj} implies that 
\begin{equation}\left.
\parbox{10.8cm}{if $\OT\nn$ is defined, then it is bordered by edges of $L$ and all of these edges with peaks different from $\Peak\nn$ are of normal slopes. Furthermore, each of $\LEOT\nn$ and $\REOT\nn$ is either the empty set or a rectangle bordered by edges of normal slopes. See also \eqref{eq:szhmnmthntCrDlHr} later.}\,\,\right\}
\label{eq:lkFnrmRgkCt}
\end{equation}

\semmi{Based on Cz\'edli \cite{CzGlamps}, we recall the following definition.}

\begin{definition}[Cz\'edli \cite{CzGlamps}]\label{def:kzQtnBhMsHl}
Let $L$ be a slim rectangular lattice.
\semmi{; Convention \ref{conv:tbdD} applies.} 

(A) The prime intervals $\pp$ of $L$ with $\Foot\pp\in\Mir L$ are called \emph{neon tubes}. If $\Foot\pp\in\Bnd L$, then $\pp$ is a \emph{boundary neon tube} and it is of a normal slope. Otherwise, $\pp$ is an \emph{internal neon tube} and it is precipitous. (Convention \ref{conv:tbdD} applies.)

(B) Boundary lamps are the same as boundary neon tubes. (However, if $I=\pp$ is a boundary lamp, then we sometimes say that $\pp$ is the neon tube of $I$). An interval $I$ is an \emph{internal lamp} if $\Peak I$ is the peak of an internal neon tube and $\Foot I$ is the meet of the feet of all internal neon tubes with peak $\Peak I$. (These neon tubes are called the neon tubes of $I$.)

(C) In our lattice diagrams (which are \tbdia-diagrams), \emph{the neon tubes are exactly the thick edges} and \emph{the feet of the lamps are black-filled}. We know from Cz\'edli \cite[Lemma 3.1]{CzGlamps} that a lamp is uniquely determined by its foot. Thus, for a lamp $I$,  \emph{we  label the black-filled vertex} $\Foot I$ in our figures by $I$ rather than by $\Foot I$. 
\end{definition}

\begin{figure}[ht] \centerline{ \includegraphics[scale=0.98]{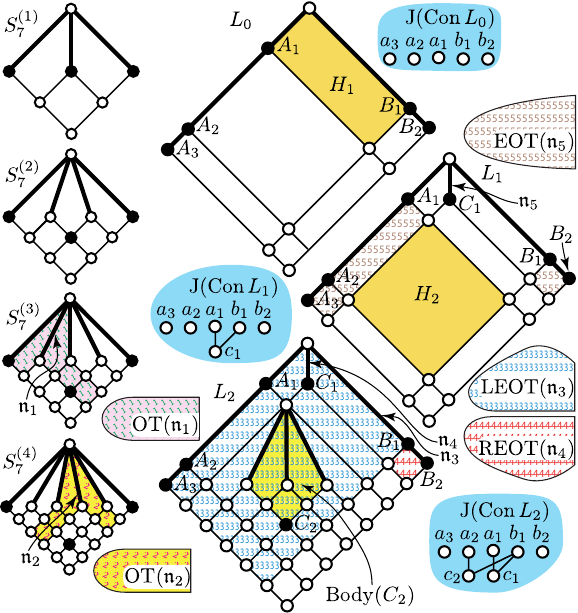}} \caption{Multifork extensions and some geometric objects}\label{fig-enpn}
\end{figure}

Lamps  have been the fundamental tool to study 
JConSPS-representability in  Cz\'edli \cite{CzGlamps}, \cite{CzGnonfnx}, \cite{CzG-DCElamps}, \cite{CzG-qdiagr}, and 
Cz\'edli and Gr\"atzer \cite{CzGGG3p3c}. 
Lamps are particular intervals $I$.  Sometimes,  we need to consider them pairs $(\Foot I,\Peak I)$. 
The (geometric) rectangle  bordered  by $\LBnd L$ and $\RBnd L$ is the \emph{full geometric rectangle} $\FullRect L$ of $L$. 
Combining Definition \ref{def:shfJlrGrDkndmn}
with  Cz\'edli \cite{CzGlamps}, recall the following.

\begin{definition}[Some geometric areas and polygons; Cz\'edli \cite{CzGlamps}]\label{def:mstTjkLvbtn} 
For a slim rectangular lattice (diagram) $L$,  let  $K$ be an interval,  $I$ and $J$ be  lamps, and $\pp$ be a neon tube of $L$. 

(A) The illuminated area $\Enl I$ of $I$ is the union of the original territories of the neon tubes of $I$.

(B) The \emph{left roof} and the \emph{left floor} of the interval $K$ of $L$ are the line segments of slope $(1,1)$ with lower endpoints on the left boundary chain and upper endpoints $\Peak K$ and $\Foot K$, respectively. They are denoted by $\LRoof K$ and $\LFloor K$, respectively. 
With slope $(1,-1)$, the \emph{right roof} $\RRoof K$ and the \emph{right floor} $\RFloor K$ are defined
analogously. The \emph{roof} $\Roof K$ and the \emph{floor} $\Floor K$ of $K$ are $\LRoof K\cup\RRoof K$ and $\LFloor K\cup \RFloor K$, respectively. 

(C) For a set $X$ of planar points,  $\GInt X$ stands for the \emph{geometric} (i.e., topological) \emph{interior} of $X$.  Let $h$ be a (geometric) polygon with endpoints $a$ and $b$ such that  $h\setminus\set{a,b}\subseteq \TopInt{\FullRect L}$,  $a\in\LBnd L$, and $b\in\RBnd L$.  Then $h$ cuts $\FullRect L$ into an upper half $\gfilter h$ and a lower half $\gideal h$; by convention, $h=\gfilter h\cap\gideal h$. Note that $\Enl I=\gfilter{\Floor I}\cap\gideal{\Roof I}$, and similarly for $\Enl{\pp}$. 

(D) The \emph{body} $\Body I$ of $I$ is the geometric region determined by $I$; if $I$ has only one neon tube, then $\Body I$ is a line segment. For example, in Figure \ref{fig-enpn}, $C_2\in\Lamp{L_2}$ and  $\Body{C_2}$ is yellow-filled.

(E) If $I$ is a internal lamp, then the \emph{circumscribed rectangle} $\CircR I$ is
the region determined by the interval $[x,\Peak I]$ where $x$ is the meet of the leftmost lower cover and the rightmost lower cover of $\Peak I$. (Equivalently, $x$ is the meet of all lower covers of $\Peak I$.)
\end{definition}

Since the edges occurring in Definition \ref{def:shfJlrGrDkndmn} are the same as the neon tubes of $L$, the following lemma in the present setting is not surprising.

\begin{lemma}[Cz\'edli {\cite[(2.10)]{CzGlamps}}]\label{lemma:mfXzsrjsmG}
For the fixed multifork sequence of $L$, see Lemma \ref{lemma:nmtfrkstZ}, the set of internal lamps of $L$ is $\set{I_j:1\leq j\leq k}$ where, for $j\in\set{1,\dots,k}$, the lamp $(\Foot{I_j}, \Peak{I_j})$ comes to existence by the $j$-th multifork extension,  $\CircR {I_j}$ in $L=L_k$ is the geometric region determined by $H_j$ in $L_{j-1}$, and $\Foot{I_{j}}\in L_j\setminus L_{j-1}$. 
\end{lemma}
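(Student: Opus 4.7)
The plan is to proceed by induction on the length $k$ of the multifork sequence $(L_0, H_1, m_1, \ldots, L_k = L)$ from Lemma \ref{lemma:nmtfrkstZ}. For the base case $k=0$, the lattice $L=L_0$ is a grid, which is distributive; hence every meet-irreducible element lies on the boundary, there are no internal neon tubes, and the set of internal lamps of $L$ is empty, matching the empty family $\set{I_j:1\leq j\leq 0}$.

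For the inductive step, suppose the statement already holds in $L_{k-1}$ with internal lamps $I_1, \ldots, I_{k-1}$ enjoying the listed properties, and consider the $m_k$-fold multifork extension at the distributive 4-cell $H_k$. Write $R_k$ for the geometric region determined by $H_k$ in $L_{k-1}$. From the definition of the multifork construction and the southwestern--southeastern propagation used to restore semimodularity in \cite{CzG:pExttCol}, three combinatorial-geometric observations must be established: (a) every new precipitous edge of $L_k$ lies inside $R_k$, and no edge outside $R_k$ changes its meet-irreducibility status; (b) inside $R_k$ there appear exactly $m_k$ new precipitous edges, all having the common peak $\Peak{H_k}$, and the meet of their feet is an element $\Foot{I_k}\in L_k\setminus L_{k-1}$; and (c) the resulting internal lamp $I_k$ satisfies $\Peak{I_k}=\Peak{H_k}$ and $\CircR{I_k}=R_k$ in $L_k$.

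Granted (a)--(c), distributivity of $H_k$ handles the interference with earlier lamps. By \eqref{eq:ltKdlvsTsmnxThj}, $\pideal{L_{k-1}}{\Peak{H_k}}$ is a grid and in particular contains no internal neon tubes; hence no $\Peak{I_j}$ for $j<k$ lies in $R_k$, and the circumscribed rectangles $\CircR{I_1},\ldots,\CircR{I_{k-1}}$ are disjoint from $R_k$. Together with (a), this implies that each earlier internal lamp $I_j$ survives unchanged in $L_k$, with the same foot, peak, set of neon tubes, and circumscribed rectangle. Combining with (b)--(c), the internal lamps of $L_k$ are exactly $\set{I_1,\ldots,I_k}$, and the new lamp $I_k$ is born at the $k$-th multifork extension with the required properties, completing the induction.

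The main obstacle is the verification of (a) and (b): they reduce to a careful reading of how a copy of $\Shk{m_k}$ replaces $H_k$ and how the propagation to the southeast and southwest extends this replacement without either altering the meet-irreducibility of existing elements outside $R_k$ or creating new internal neon tubes whose peak differs from $\Peak{H_k}$. This is the only place where the detailed structure of $\Shk m$ and of the multifork construction from \cite{CzG:pExttCol} really enters the argument; once (a) and (b) are secured, (c) and the persistence of the earlier lamps are essentially immediate.
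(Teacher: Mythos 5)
A preliminary remark on the comparison itself: the paper does not prove this lemma — it is recalled from Cz\'edli \cite[(2.10)]{CzGlamps} — so there is no in-paper argument to measure your attempt against. Any self-contained proof has to redo the bookkeeping of the multifork construction of \cite{CzG:pExttCol}, which is indeed what your induction along the multifork sequence sets out to do, and your use of the distributivity of $H_k$ via \eqref{eq:ltKdlvsTsmnxThj} to show that earlier lamps are undisturbed is the same mechanism the paper itself invokes when it asserts \eqref{eq:szhmnmthntCrDlHr}.

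However, as written the proposal has a genuine gap: your items (a) and (b) \emph{are} the content of the lemma, and you do not prove them — you explicitly defer them to ``a careful reading'' of how $\Shk{m_k}$ replaces $H_k$ and how the construction propagates. Until that step is carried out, the induction establishes nothing, because everything (which precipitous edges are created, that their common peak is $\Peak{H_k}$, that no old element becomes meet-irreducible or ceases to be, that old lamps keep their feet, peaks and neon tubes) sits inside the deferred step. Moreover, (a) is not accurately phrased: the $m_k$-fold extension does change the diagram outside the region of $H_k$, since the construction continues to the southwest and southeast, inserting new elements that subdivide old 4-cells and old covering pairs along the two trajectories through the lower edges of $H_k$ (already when a single covering square is turned into $\Shk 1$, both lower boundary edges are subdivided). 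What actually needs verifying is: every old element keeps its \emph{number} of upper covers (a subdivided cover $x\prec y$ is replaced by $x\prec w$ with $w$ new), each new element along the two legs has exactly two upper covers and is therefore meet-reducible, so the only new meet-irreducible elements are the $m_k$ new lower covers of $\Peak{H_k}$, whose (precipitous) covering edges all have peak $\Peak{H_k}$; and no peak of an earlier internal lamp is among the elements whose covers are disturbed, which is where the distributivity of $H_k$ enters. Also, the claim that the earlier $\CircR{I_j}$ are disjoint from the region of $H_k$ is stronger than what your grid argument gives and is not what is needed; the relevant point is only that $\Peak{I_j}$ and the lower covers of $\Peak{I_j}$ are untouched by the extension. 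Supplying these verifications would turn your outline into a correct proof; without them it remains a plan.
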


Since the multifork extensions in Lemma \ref{lemma:nmtfrkstZ} are performed at \emph{distributive} 4-cells, it follows easily that, using the notations of Lemma \ref{lemma:mfXzsrjsmG}, for any $j\in\set{1,\dots,k}$, 
\begin{equation}\left.
\parbox{9.5cm}{the lower covers of $\Peak {I_j}$ are the same in $L_j$ as in $L=L_k$. In particular, $I_j$ has the same neon tubes in $L_j$ as in $L$. Furthermore, if a neon tube $\nn$ comes to existence in $L_j$, then $\EOT\nn$, $\LEOT\nn$, and $\REOT\nn$ are the same in $L_j$ as in $L$.}\,\,\right\}
\label{eq:szhmnmthntCrDlHr}
\end{equation}

\begin{definition}\label{def:hzjJLpgGdjR}
With the notation used in Lemma \ref{lemma:mfXzsrjsmG}, let $I_i$ and $I_j$ be lamps of $L$. If $i<j$, then we say that $I_j$ is \emph{younger} than $I_i$ and $I_i$ is \emph{older} than $I_j$. (This concept depends on the multifork sequence, but this sequence is always fixed.)
\end{definition}

By an \emph{edge segment} we mean a geometric line segment $\intv g$ of positive length with endpoints lying on the same edge  $\intv e$ of (the fixed \tbdia-diagram of) $L$. In this case, we say that $\intv g$ is an \emph{edge segment of} $\intv e$. Based on the fact that the neon tubes of $L$ are exactly the prime intervals occurring in Definition \ref{def:shfJlrGrDkndmn}, we can recall a part of  Cz\'edli \cite[Definition 2.9]{CzGlamps}
and extend it as follows.

\begin{definition}\label{def:kzpnjvdlkgrGsMb} Let $I$ and $J$ be lamps of a slim rectangular lattice $L$.

(A) Let $\pair I J\in\rhfoot$ mean that $I\neq J$, $\Foot I\in\Enl J$, and $I$ is an internal lamp.

(B) Let $\pair I J\in\rhotfoot$ mean that $I\neq J$, $I$ is an internal lamp, and $J$ has a neon tube $\nn$ such that $\Foot I\in\GInt{\LEOT\nn}$ or $\Foot I\in\GInt{\REOT\nn}$.

(C) Let $\pair I J\in\rhotcr$ mean that $I\neq J$, $I$ is an internal lamp, and $J$ has a neon tube $\nn$ such that $\CircR I\subseteq \LEOT\nn$ or $\CircR I\subseteq \REOT\nn$.

(D) Let $\pair I J\in\rhcircr$ mean that $I\neq J$, $I$ is an internal lamp, and $\CircR I\subseteq\Enl J$.

(E) Let $\Lamp L$ be the set of lamps of $L$, and let ``$\leq$'' be the reflexive and transitive closure of the relation $\rhfoot$.
The relational structure $(\Lamp L;\leq)$ is also denoted by $\Lamp L$.
\end{definition}

The congruence generated by a pair $(x,y)$ of elements will be denoted by $\con(x,y)$.

\begin{lemma}[Mostly Cz\'edli {\cite[Lemma 2.11]{CzGlamps}}]\label{lemma:vltfLm} If $L$ is a slim rectangular lattice, then $\rhfoot$ $=$  
$\rhcircr$ $=$  $\rhotfoot$ $=$ $\rhotcr$, 
$\Lamp L=(\Lamp L;\leq)$ is a poset, and
whenever  $I\prec J$ in $\Lamp L$, then $(I,J)\in\rhfoot$. Furthermore, we have that $(\Lamp L;\leq) \cong (\Jir{\Con L};\leq)$ and the map
\begin{equation}
\text{$\phi\colon (\Lamp L;\leq) \to (\Jir{\Con L};\leq)$ defined by $I\mapsto \con(\Foot I,\Peak I)$}
\label{eq:sznZtcsTl}
\end{equation} 
is an order isomorphism.
\end{lemma}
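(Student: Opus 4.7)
My plan is to handle the lemma in three blocks, invoking results from \cite{CzGlamps} wherever possible and supplying the genuinely new geometric arguments for the remaining parts.

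\textbf{Block 1 (cite the earlier paper).} I would directly invoke \cite[Lemma 2.11]{CzGlamps} to obtain the equality $\rhfoot=\rhcircr$, the fact that $(\Lamp L;\leq)$ is a poset, the cover-implication $I\prec J\then (I,J)\in\rhfoot$, and the order-isomorphism $\phi\colon\Lamp L\to\Jir{\Con L}$. (Antisymmetry itself can also be re-derived by a rank argument from Definition~\ref{def:hzjJLpgGdjR}: any $(I,J)\in\rhfoot$ forces $I$ to be strictly younger than $J$, because $\Enl J$ is fully determined at the step when $J$ is born, while $\Foot I$ comes to existence only at the step when $I$ is born, cf.\ \eqref{eq:szhmnmthntCrDlHr} and Lemma~\ref{lemma:mfXzsrjsmG}.)

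\textbf{Block 2 (easy inclusions).} From Definition~\ref{def:mstTjkLvbtn}(A), $\Enl J$ equals the union of the $\OT\nn$ as $\nn$ ranges over the neon tubes of $J$. Since by definition $\LEOT\nn\cup\REOT\nn\subseteq\OT\nn$, I get $\rhotfoot\subseteq\rhfoot$ and $\rhotcr\subseteq\rhcircr$ immediately.

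\textbf{Block 3 (the new content: reverse inclusions).} Fix $(I,J)\in\rhfoot$, so $I$ is internal and $\Foot I\in\Enl J$; pick the neon tube $\nn$ of $J$ with $\Foot I\in\OT\nn$. By \eqref{eq:lkFnrmRgkCt}, $\OT\nn$ decomposes geometrically into $\LEOT\nn$, the ``central strip'' consisting of the 4-cells of the trajectory of $\nn$ that contain $\nn$ itself, and $\REOT\nn$; the central strip meets the essential parts only along line segments of normal slopes. If $\Foot I$ were on such a shared segment, then it would lie on (or be) the foot of a neon tube of $J$ of normal slope, i.e.\ a boundary neon tube, contradicting Definition~\ref{def:kzQtnBhMsHl}(B) applied to the internal lamp $I$ together with \eqref{eq:szhmnmthntCrDlHr}. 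Hence $\Foot I\in\GInt{\LEOT\nn}\cup\GInt{\REOT\nn}$, yielding $(I,J)\in\rhotfoot$. For $\rhcircr\subseteq\rhotcr$, using $\rhfoot=\rhcircr$ from Block~1, the rectangle $\CircR I$ lies in $\Enl J$; since each $\LEOT\nn$, $\REOT\nn$ is a rectangle bordered by normal-slope edges and $\CircR I$ is connected, $\CircR I$ cannot jump across the central strip of $\nn$, so it sits entirely inside $\LEOT\nn$ or inside $\REOT\nn$.

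\textbf{Main obstacle.} The delicate step is the boundary analysis in Block~3: ruling out that $\Foot I$ (respectively $\CircR I$) can touch the central strip. This is where I would carefully combine the internal/boundary dichotomy for neon tubes (Definition~\ref{def:kzQtnBhMsHl}(A)) with the multifork genesis of $\Foot I$ given by Lemma~\ref{lemma:mfXzsrjsmG} and the invariance statement \eqref{eq:szhmnmthntCrDlHr}, so that the geometric location of the foot of an internal lamp is tightly constrained to the interiors of the essential original territories of strictly older lamps' neon tubes.
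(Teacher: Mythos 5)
Your Blocks 1 and 2 are correct and essentially match the paper: the paper also cites \cite[Lemma 2.11]{CzGlamps} for everything except the three new equalities, and it also obtains $\rhotcr\subseteq\rhotfoot\subseteq\rhfoot$ from trivial containments. The ``aside'' that $(I,J)\in\rhfoot$ forces $I$ strictly younger than $J$ is also used in the paper. So the architecture of your proof is right.

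The gap is in Block 3. Your boundary-exclusion step --- ``if $\Foot I$ were on a shared segment \dots{} it would lie on (or be) the foot of a neon tube of $J$ of normal slope, i.e.\ a boundary neon tube'' --- is not justified and is not correct as stated. The segments separating the central strip from $\LEOT\nn$ and $\REOT\nn$ are sides of 4-cells of the trajectory; their lattice points are corners of $\CircR J$ and meets of adjacent feet of $J$'s neon tubes, not feet of boundary neon tubes, and in any case $J$ here is internal so all its neon tubes are precipitous, so the phrase ``a neon tube of $J$ of normal slope'' does not even parse for the relevant case. Moreover this step, even repaired, would only locate $\Foot I$; it would not immediately bound where $\CircR I$ lives, and your argument for $\rhcircr\subseteq\rhotcr$ (``$\CircR I$ cannot jump across the central strip'') is a geometric assertion with no supporting argument --- $\CircR I\subseteq\Enl J$ a priori allows $\CircR I$ to overlap the central strip. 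The paper avoids all of this by proving $\rhfoot\subseteq\rhotcr$ first, and by working in $L_{i-1}$ from the fixed multifork sequence: there $\CircR{I_i}$ \emph{is} the single distributive 4-cell $H_i$, the territories $\LEOT{\nn_t}$, $\REOT{\nn_t}$ and the complement of $\EOT{\nn_t}$ are unions of 4-cells of $L_{i-1}$ separated by polygons of lattice edges, and by planarity a 4-cell cannot straddle such a polygon; distributivity of $H_i$ then rules out $\Peak{H_i}=\Peak{I_j}$, forcing $H_i$ into $\LEOT{\nn_t}$ or $\REOT{\nn_t}$. The fact $\Foot I\in\GInt{\CircR I}$ then gives $\rhotcr\subseteq\rhotfoot$ for free (this direction is used in the proof of Lemma~\ref{lemma:thtznjtVnPrm}). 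Your ``Main obstacle'' paragraph correctly diagnoses that the multifork genesis is the key input, but the argument you actually wrote in Block~3 does not use it, and the shortcut it tries to take instead does not close.
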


The advantage of this lemma over its precursor, \cite[Lemma 2.11]{CzGlamps}, is that   $(I,J)\in\rhfoot$ is a mild condition, which is easy to verify, while $(I,J)\in\rhotcr$ is a strong condition, which gives more chance to draw conclusion from. 

\begin{proof}
With the exception of ``$\rhfoot$ $=$ $\rhotfoot$ $=$ $\rhotcr$'', the lemma is already known; see Cz\'edli \cite[Lemma 2.11]{CzGlamps}. So we need only to show the just-mentioned equalities. 
Clearly, $\rhotcr \subseteq \rhotfoot\subseteq \rhfoot$. Assume that $I_i,I_j\in \Lamp L$ such that $(I_i,I_j)\in\rhfoot$.  Since $\Shk {m_{i}}$ is not distributive, it follows from \eqref{eq:ltKdlvsTsmnxThj} and  Lemmas \ref{lemma:nmtfrkstZ} and \ref{lemma:mfXzsrjsmG} that $I_i$ is younger than  $I_j$, that is, $i>j$. 
In particular, $I_i$ is an internal lamp.
With $m:=m_j$, let $\nn_1,\dots,\nn_m$ be the neon tubes of $I_j$. As $i>j$, these neon tubes are present in $L_{i-1}$, and so are their original territories $\OT{\nn_1}$,\dots, $\OT{\nn_m}$ as well as their essential original territories; see \eqref{eq:szhmnmthntCrDlHr}. By \eqref{eq:lkFnrmRgkCt} applied to $L_{i-1}$, these territories are separated by polygons consisting of lattice edges. By planarity, these ``separating polygons'' cannot cross the 4-cell $H_i$ of $L_{i-1}$; this 4-cell becomes $\CircR{I_i}$ in $L_i$ and in $L$. So $\CircR{I_i}\subseteq \OT{\nn_t}$ for some $t\in\set{1,\dots,m}$. But the 4-cell $H_i$ in question cannot have the same top as $I_j$ since 
the opposite case would contradict the distributivity of $H_i$ in $L_{i-1}$. (Alternatively, \cite[Lemma 6.2]{CzG-qdiagr} would also lead to a contradiction.) Hence, $\CircR{I_i}=H_i\subseteq \EOT{\nn_t}$. Since $\EOT {\nn_t}$ is the union of its two connected ``components", $\LEOT {\nn_t}$ and $\REOT {\nn_t}$, and these  components are in a positive geometric distance from each other (provided none of them is the empty set), the planarity of the diagram yields that $\CircR{I_i}=H_i\subseteq \LEOT{\nn_t}$ or $\CircR{I_i}=H_i\subseteq \REOT{\nn_t}$. Hence, 
$\pair {I_i}{I_j}\in \rhotcr$, implying that $\rhotcr\subseteq \rhfoot$ and completing the proof of Lemma \ref{lemma:vltfLm}.
\end{proof}

Since we work with the \tbdia-diagram of our slim rectangular lattice $L$, the illuminated sets $\Enl I$ and the $\Foot I$, and so the relation $\rhfoot$ are perfectly described by the geometric structure
\begin{align}
&\left.\Str L:= \Bigl(
\FullRect L, \set{(\Foot I,\Peak I): I\in\Lamp L}
\Bigr).\right.
\label{eq:zgtlnRdkrtkZnt}
\\
&\left.
\parbox{10.5cm}{In particular, if  $L$ and $L'$ are slim rectangular lattices such that $\Str L=\Str{L'}$, then $\Lamp L\cong\Lamp{L'}$ and so 
$\Con L\cong\Con{L'}$.}\,\,\right\}
\label{eq:NmdTlskDmbzK}
\end{align}

\section{Auxiliary statements}

\begin{figure}[ht] \centerline{ \includegraphics[width=0.98\textwidth]{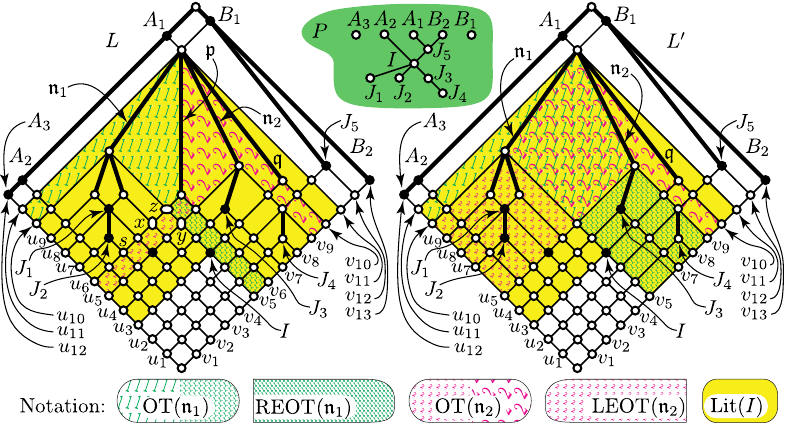}} \caption{Illustrating the proof of Lemma \ref{lemma:middllunt} by $\Lamp L\cong P\cong\Lamp{L'}$}\label{fig-knpn}
\end{figure} 

The following definition is motivated by $\rhotcr$; see  Definition \ref{def:kzpnjvdlkgrGsMb} and Lemma~\ref{lemma:vltfLm}.

\begin{definition}\label{def:mprdClgRtRblc} 
For a  slim rectangular lattice $L$ and  $J\in\Lamp L$, let $\pp$ be a neon tube of $J$. We say that \emph{the original territory of $\pp$ is used} if 
 there is a lamp $I\in \Lamp  L$ such that 
$I\neq J$ and $\CircR I\subseteq \LEOT\pp$ or $\CircR I\subseteq \REOT\pp$.  
If $I$ is such, then we say that $I$ \emph{uses the original territory} of $\pp$. 
If there is no such $I$, then \emph{the original territory  of $\,\pp$ is not used}.
\end{definition}

\begin{remark}\label{rem:nshmlmfPvhMrGnk}
Lemma \ref{lemma:vltfLm} implies that 
in Definition \ref{def:mprdClgRtRblc}, 
``$I\neq J$'' is equivalent to ``$I<J$''. Furthermore,  $I\neq J$  occurs in Definition \ref{def:mprdClgRtRblc} only for emphasis, so it could be omitted; analogous comments would apply to Lemma \ref{lemma:thtznjtVnPrm} below.
\end{remark}

\begin{lemma}\label{lemma:thtznjtVnPrm}
For $\pp$ and $J$ as in Definition \ref{def:mprdClgRtRblc}, the following four conditions are equivalent. 

\textup{(a)} The original territory of 
$\pp$ is used, 
that is, there is lamp $I$ such that $\pp$ is not a neon tube of $I$ 
and $\CircR I\subseteq \LEOT\pp$ or $\CircR I\subseteq \REOT\pp$. 

\textup{(b)} There is a lamp $I\in\Lamp L\setminus\set J$ such that $\Foot I$ is in $\GInt{\LEOT\pp}$ or it is in $\GInt{\REOT\pp}$.

\textup{(c)} There is a lamp $I\in\Lamp L\setminus\set J$ such that $\Foot I$ is in $\EOT\pp$.

\textup{(d)} There is a precipitous edge segment in $\EOT \pp$. 

Furthermore, if a lamp $I$ satisfies 
one of \textup{(a)}, \textup{(b)}, and \textup{(c)}, then it satisfies all the three.
\end{lemma}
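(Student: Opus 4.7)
My plan is to prove the cycle $(a) \Rightarrow (b) \Rightarrow (c) \Rightarrow (a)$ in such a way that the \emph{same} witness lamp $I$ works throughout, and to treat the equivalence with $(d)$ separately. The central geometric observation driving the first cycle is that, for every internal lamp $I = I_j$ of $L$, the element $\Foot I$ lies in the geometric interior $\GInt{\CircR{I}}$ of its circumscribed rectangle. This is immediate from Lemma~\ref{lemma:mfXzsrjsmG}: $\CircR{I_j}$ coincides, as a geometric region, with the 4-cell $H_j$ of $L_{j-1}$, while $\Foot{I_j} \in L_j \setminus L_{j-1}$ is introduced \emph{strictly inside} $H_j$ by the $j$-th multifork extension.

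Once this helper is in place, $(a) \Rightarrow (b)$ becomes almost immediate: from $\CircR I \subseteq \LEOT\pp$, a small open neighborhood of $\Foot I$ lying in $\CircR I$ also lies in $\LEOT\pp$, so $\Foot I \in \GInt{\LEOT\pp}$; the right-hand case is symmetric. The implication $(b) \Rightarrow (c)$ is the trivial inclusion $\GInt{\LEOT\pp} \cup \GInt{\REOT\pp} \subseteq \EOT\pp$. For $(c) \Rightarrow (a)$, I would note that $\EOT\pp \subseteq \OT\pp \subseteq \Enl J$, so Definition~\ref{def:kzpnjvdlkgrGsMb}(A) combined with $I \neq J$ yields $(I,J) \in \rhfoot$, which forces $I$ to be internal. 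Lemma~\ref{lemma:vltfLm} then gives $(I,J) \in \rhotcr$, i.e., a neon tube $\nn$ of $J$ with $\CircR I \subseteq \LEOT\nn$ or $\CircR I \subseteq \REOT\nn$; reapplying the $(a) \Rightarrow (b)$ argument to $\nn$ yields $\Foot I \in \GInt{\LEOT\nn} \cup \GInt{\REOT\nn}$. Identifying $\nn$ with $\pp$ requires that, as $\nn'$ ranges over the neon tubes of $J$, the interiors $\GInt{\LEOT{\nn'}}$ and $\GInt{\REOT{\nn'}}$ are pairwise disjoint. This is the \emph{main obstacle} of the proof, and I would handle it by the same ``separating polygons'' idea used in the proof of Lemma~\ref{lemma:vltfLm}: by \eqref{eq:lkFnrmRgkCt} and \eqref{eq:szhmnmthntCrDlHr} the boundaries between these essential territories consist of lattice edges of normal slope, and planarity prevents any two of them from crossing.

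For the equivalence with $(d)$: the direction $(a) \Rightarrow (d)$ is easy, since the 4-cell $H_j = \CircR I$ contained in $\LEOT\pp$ (or $\REOT\pp$) hosts the internal neon tubes of $I$, each a precipitous edge sitting in $\EOT\pp$. For $(d) \Rightarrow (a)$, a precipitous edge segment lies on a precipitous edge which, by Definition~\ref{def:mndLlspSgrnKpL}(D), is an internal neon tube $\nn$, hence a neon tube of some internal lamp $I$. Because $\LEOT\pp$ and $\REOT\pp$ are bordered by edges of normal slope and lattice edges do not cross (planarity), the connected edge $\nn$ must be entirely contained in one of $\LEOT\pp, \REOT\pp$. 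A further non-crossing argument propagates this containment from $\nn$ to the surrounding 4-cell $H_j = \CircR I$, yielding $(a)$ for this same $I$ and closing the proof.
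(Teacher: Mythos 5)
The decisive step of your (c)$\Rightarrow$(a) does not work. You reduce that implication to the claim that, as $\nn'$ ranges over the neon tubes of $J$, the interiors $\GInt{\LEOT{\nn'}}$ and $\GInt{\REOT{\nn'}}$ are pairwise disjoint, and you propose to obtain this from the separating-polygon/non-crossing idea. But this disjointness is false: if $\nn_1$ is a neon tube of $J$ to the left of another neon tube $\nn_2$ of $J$, then $\REOT{\nn_1}$ runs to the southeast and $\LEOT{\nn_2}$ runs to the southwest from two nearby lower covers of $\Peak J$, so these two rectangles cross below $J$; only the same-side interiors are pairwise disjoint. The paper relies on exactly this crossing phenomenon elsewhere: in the proof of Theorem \ref{thm:est} only same-side disjointness is invoked and a lamp $I$ is allowed to use the original territories of \emph{two} neon tubes of $J$, and in the proof of Proposition \ref{prop-dbl} the 4-cell $H'_{s+1}$ is chosen precisely where $\REOT{\pp'}$ and $\LEOT{\qq'}$ cross. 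Consequently, from $\Foot I\in\EOT\pp$ and $\CircR I\subseteq\LEOT\nn$ or $\CircR I\subseteq\REOT\nn$ you cannot conclude $\nn=\pp$ (the point $\Foot I$ may lie in a crossing region), so your argument never produces containment in a territory of the \emph{given} $\pp$. The paper's proof of (c)$\Rightarrow$(a) avoids the identification problem altogether: from $(I,J)\in\rhfoot$ it deduces that $I=I_t$ is younger than $J$, observes (via \eqref{eq:szhmnmthntCrDlHr} and \eqref{eq:lkFnrmRgkCt}) that $\LEOT\pp$, $\REOT\pp$, and the complement of $\EOT\pp$ are unions of 4-cells of $L_{t-1}$, so the 4-cell $H_t=\CircR{I_t}$ lies wholly inside one of these three regions, and $\Foot I\in\GInt{H_t}\cap\EOT\pp$ excludes the third possibility. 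Your argument can be repaired along these lines (you already have $\rhfoot$ and could extract youngness), but not via the disjointness claim.

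Your (a)$\Rightarrow$(b), (b)$\Rightarrow$(c), and (a)$\Rightarrow$(d) match the paper. In (d)$\Rightarrow$(a), however, ``a further non-crossing argument propagates this containment from $\nn$ to $\CircR I$'' is too quick: knowing that one neon tube of $I$ lies in $\LEOT\pp$ does not by itself give $\CircR I\subseteq\LEOT\pp$, since statements like \eqref{eq:wCnflTgwrJkp} protect only the upper half of $\CircR I$ from foreign edges. The paper instead shows that \emph{every} neon tube of $I$ has its foot in $\LEOT\pp$, uses the main result of \cite{CzGpropmeet} (the intervals from $\Foot I$ to the feet of the leftmost and rightmost neon tubes are chains of normal slopes) to place $\Foot I$ itself in $\LEOT\pp$, and then only needs the already-proved implication (c)$\Rightarrow$(a). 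So this step, too, ultimately leans on the very implication whose proof is the gap described above.
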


\begin{proof} Since we never change $I$ to another lamp, the last sentence of the lemma will automatically follow when
the equivalence of (a), (b), and (c) has been proved.

Since $\Foot I\in\GInt{\CircR I}$, (a) implies (b). By the equality $\EOT\pp=\LEOT\pp\cup\REOT\pp$, we obtain that (b) implies (c). 

Next, assume that (c) holds. Then  $\Foot I\in \EOT\pp \subseteq \Enl J$ and so $(I,J)\in\rhfoot$. By Lemma \ref{lemma:vltfLm},  $(I,J)\in\rhotcr$ and so $\Body I\subseteq \CircR I\subseteq \Enl I$.
Thus, $I_t:=I$ is younger than $I_k:=J$ in the sense of Definition \ref{def:hzjJLpgGdjR}, that is, $t>k$;  indeed, if $I=I_t$ was older than $J=I_k$, then the 4-cell $H_k$ would not be distributive in $L_{k-1}$. In $L_k$,  each of $\LEOT\pp$, $\REOT\pp$, and 
$\FullRect{L_k}\setminus \EOT\pp$ were unions of 4-cells. Some of these 4-cells  could have been divided into smaller ones later, but even in $L_{t-1}$, 
each of $\LEOT\pp$, $\REOT\pp$, and 
$\FullRect{L_{t-1}}\setminus \EOT\pp$
were unions of  4-cells. Hence, 
$H_t\subseteq \LEOT\pp$, $H_t\subseteq \REOT\pp$,  or $H_t$ is outside $\EOT\pp$. Since $\Foot{I}=\Foot{I_t}\in\GInt{H_t}$ and $\Foot I\in\EOT\pp$,  $H_t$ was not outside $\EOT\pp$. Hence, $\CircR I=\CircR{I_t}=H_t\subseteq \LEOT\pp$ or 
 $\CircR I\subseteq \REOT\pp$, whereby the original territory of $\pp$ is used. 
Thus, (c) implies (a), and we have proved that (a), (b), and (c) are equivalent conditions. 

By Remark \ref{rem:nshmlmfPvhMrGnk}, the implication (a) $\then$ (d) is trivial. 

Finally, assume that (d) holds. Then we have a precipitous edge segment in $\LEOT\pp$ or in $\REOT\pp$, say, in $\LEOT\pp$. By the second half of \eqref{eq:lkFnrmRgkCt}, we can assume that a precipitous edge segment lies in $\GInt{\LEOT\pp}$. This edge segment lies on a neon tube $\qq$ of a lamp $I$. By planarity and \eqref{eq:lkFnrmRgkCt}, $\qq$ cannot cross the four sides bordering (the geometric rectangle) $\LEOT\pp$, so $\qq$ lies fully in $\LEOT\pp$. In particular, $\Peak I=\Peak \qq\in\LEOT\pp$ and $\Foot\qq\in\LEOT\pp$. Observe  that $\Peak I$ cannot lie on the lower boundary of $\LEOT\pp$ since otherwise $\qq$, going down from $\Peak I$ with a precipitous slope, could not include an edge segment lying in $\LEOT\pp$. 

Next, let $\rr$ be an arbitrary neon tube of $I$. It goes down from $\Peak\rr=\Peak I$ with a precipitous slope. Thus, since $\Peak\rr$ is not on the lower boundary, \eqref{eq:lkFnrmRgkCt} yields that an edge segment lying on $\rr$ lies also in $\GInt{\LEOT\pp}$. So $\rr$ satisfies the same condition as $\qq$ above, and it follows that $\Foot\rr\in\LEOT\pp$. 

Now let $\rr'$ and $\rr''$ be the leftmost neon tube and the rightmost neon tube of $I$. If $\rr'= \rr''$, then $\qq$ is the only neon tube of $I$, and the required $\Foot I\in \LEOT\pp$ follows from $\Foot I=\Foot\qq\in\LEOT\pp$. So we can assume that $\rr'\neq\rr''$. 
Then $\Foot{\rr'}$ and $\Foot{\rr''}$, as distinct lower covers of $\Peak I$, are incomparable; see \eqref{eq:szhmnmthntCrDlHr}. 
By the main result of Cz\'edli \cite{CzGpropmeet}
and $\Foot I=\Foot{\rr'}\wedge \Foot{\rr''}$, the interval
$[\Foot I,\Foot{\rr'}]$ is a chain (and so a line segment) of slope $(1,-1)$ while 
$[\Foot I,\Foot{\rr''}]$ is a line segment of slope $(1,1)$. 
The top endpoints $\Foot{\rr'}$ and $\Foot{\rr''}$ of these line segments are in $\LEOT\pp$, whereby so is their common bottom $\Foot I$ by the second half of \eqref{eq:lkFnrmRgkCt}. Hence, $\Foot I\in\LEOT\pp$, that is, (a) holds. This completes the proof of Lemma \ref{lemma:thtznjtVnPrm}.
\end{proof}

Let $\pp$ be an internal neon tube of a slim rectangular lattice $L$. As in  Cz\'edli and Schmidt \cite{czgscht-visual} (but with different terminology), the \emph{fork  determined by $\pp$} is
\begin{equation}\left.
\parbox{10cm}{$F(\pp):=[\lsupp{\Foot\pp},\Foot\pp]\cup [\rsupp{\Foot\pp},\Foot\pp]$ together with the edges of these two intervals and the edge $\pp$.}\,\,\right\}
\label{eq:lcnFrhvLmTrcJflSr}
\end{equation}
For the  particular case when  $\pideal{L'}{\Peak \pp}$ is distributive, the following lemma occurs implicitly in  \cite{czgscht-visual}.

\begin{lemma} \label{lemma:mThFgZzbBRt}
If $\pp$ is a neon tube of a slim rectangular lattice $L$ and $L':=L\setminus F(\pp)$, see \eqref{eq:lcnFrhvLmTrcJflSr}, then $L'$ is meet-subsemilattice of $L$.
\end{lemma}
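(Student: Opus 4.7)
The plan is to argue by contradiction. Suppose $x, y \in L'$ satisfy $z := x \wedge_L y \in F(\pp)$; I will show that this forces $x \in F(\pp)$, contrary to $x \in L'$. By the left–right symmetry built into the definition of $F(\pp)$, it suffices to handle the case $z \in [\lsupp{\Foot\pp}, \Foot\pp]$, so I assume it.

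The first step transfers the information from $z$ to $x$. Because $\lsupp(\cdot) = \cdot \wedge \lcorner L$ distributes over binary meets, we have $\lsupp z = \lsupp x \wedge \lsupp y$, and a short direct computation from $\lsupp{\Foot\pp} \le z \le \Foot\pp$ yields $\lsupp z = \lsupp{\Foot\pp}$. Since $\lsupp x$ and $\lsupp y$ both lie on the chain $\pideal L{\lcorner L}$ (the lower left boundary), their meet in $L$ is simply the smaller of the two, and so, after relabeling if necessary, we may assume $\lsupp x = \lsupp{\Foot\pp}$.

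Now I invoke the geometry of the \tbdia-diagram. The intervals $[\lsupp x, x] = [\lsupp{\Foot\pp}, x]$ and $[\lsupp{\Foot\pp}, \Foot\pp]$ are each chains whose edges all have slope $(1,1)$. In a \tbdia-diagram, two distinct upper covers of the same vertex cannot both be reached by slope-$(1,1)$ edges (such edges would overlap geometrically), so the slope-$(1,1)$ chain ascending from any fixed vertex is uniquely determined; the two chains above therefore sit inside a single maximal slope-$(1,1)$ chain out of $\lsupp{\Foot\pp}$. This maximal chain must stop at $\Foot\pp$, because $\Foot\pp \in \Mir L$ has only $\Peak\pp$ as an upper cover, and the edge $\pp$ is precipitous—not of slope $(1,1)$—since $\pp$ is an internal neon tube (Definition \ref{def:kzQtnBhMsHl}(A)). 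Consequently $x \le \Foot\pp$, whence $x \in [\lsupp{\Foot\pp}, \Foot\pp] \subseteq F(\pp)$, the desired contradiction.

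The only substantive ingredient is the uniqueness of the slope-$(1,1)$ chain out of a fixed vertex in a \tbdia-diagram; this follows immediately from the convention on edge slopes (and is the same principle that underlies, e.g., Definition \ref{def:mstTjkLvbtn}(B)), so no real obstacle is anticipated.
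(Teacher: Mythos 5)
Your opening reduction is sound and coincides with the paper's: since $t\mapsto\lsupp t$ is a meet-endomorphism into the chain $\pideal L{\lcorner L}$, one may indeed assume $\lsupp x=\lsupp{\Foot\pp}$, and everything hinges on deducing $x\le\Foot\pp$ from this. The gap is exactly there. You assert as if it were automatic that $[\lsupp{\Foot\pp},x]$ is a chain all of whose edges have slope $(1,1)$, and you declare that the only substantive ingredient is the uniqueness of the ascending slope-$(1,1)$ chain. That uniqueness is indeed easy, but the asserted claim is precisely as strong as what has to be proved, and the \tbdia-diagram convention does not give it: the convention only says each edge is precipitous or of some normal slope, it does not tell you which normal slope the edges of $[\lsupp{\Foot\pp},x]$ carry, and above all it does not exclude a \emph{precipitous} edge inside that interval. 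If, say, $x>\Foot\pp$, then the chain $[\lsupp{\Foot\pp},x]$ contains the precipitous edge $\pp$ itself and your argument collapses; ruling this out amounts to showing $\lsupp{\Foot\pp}<\lsupp{\Peak\pp}$ (more generally $\lsupp{\Foot\qq}<\lsupp{\Peak\qq}$ for internal neon tubes $\qq$), which is the one genuinely nontrivial point of the paper's proof and is obtained there by going back to the multifork sequence (Lemmas \ref{lemma:nmtfrkstZ} and \ref{lemma:mfXzsrjsmG}) and using that $L_i$ is a sublattice of $L$. Likewise, nothing you say excludes an edge of slope $(1,-1)$ in $[\lsupp{\Foot\pp},x]$; that requires a diagram fact (going up along a slope-$(1,-1)$ edge increases $\lsupp{\cdot}$, or a statement in the spirit of the main result of \cite{CzGpropmeet}, which the paper itself quotes at \eqref{eq:szmRkhs} only for the two intervals of $F(\pp)$), not merely the slope convention. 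Even the chain property of $[\lsupp{\Foot\pp},x]$ deserves a line, though that one is cheap: every $z$ in it satisfies $z=\lsupp{\Foot\pp}\vee\rsupp z$, and right supports lie on the chain $\RBnd L$, so any two such $z$ are comparable.

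For comparison, the paper proves the needed implication, stated as \eqref{eq:znfLdjfrGlTkr}, without any slope considerations: from $\lsupp x=\lsupp{\Foot\pp}$ it compares $\rsupp x$ with $\rsupp{\Foot\pp}$ on the chain $\RBnd L$; if $\rsupp x\le\rsupp{\Foot\pp}$ then $x=\lsupp x\vee\rsupp x\le\Foot\pp$ and you are done, while otherwise $\Foot\pp\le x$, so by the meet-irreducibility of $\Foot\pp$ one gets $\Peak\pp\le x$ and hence $\lsupp{\Peak\pp}\le\lsupp x=\lsupp{\Foot\pp}$, contradicting the multifork-sequence fact $\lsupp{\Foot\pp}<\lsupp{\Peak\pp}$. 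If you want to keep your geometric route, you must either prove your claim (i) — in particular the two exclusions above — or replace it by an explicit appeal to the main result of \cite{CzGpropmeet} applied to $x$ and the left corner (checking that it covers arbitrary incomparable pairs and yields the slope information), rather than to ``the convention on edge slopes.''
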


\begin{proof}
First, we prove that 
\begin{equation}
[\lsupp{\Foot\pp},\Foot\pp] =\set{x\in L: \lsupp x=\lsupp{\Foot \pp}}.
\label{eq:znfLdjfrGlTkr}
\end{equation}
Denote $\Foot\pp$ by $w$ and  $\lsupp{\Foot\pp}$ by $u$; so $u=\lsupp w$ and we need to show that $[u,w]=\set{x\in L: \lsupp x=u}$. For $y\in[u,w]$, we have that $u=\lsupp u\leq\lsupp y\leq \lsupp w=u$. Hence, $y\in \set{x\in L: \lsupp x=u}$ and we obtain that 
$[u,w]\subseteq\set{x\in L: \lsupp x=u}$.
To exclude that ``$\subset$'' holds here, suppose for contradiction that there is a $z\in \set{x\in L: \lsupp x=u}$ such that $z\notin [u,w]$.
Then  
$ z=\lsupp z\vee \rsupp z=u\vee\rsupp z$ implies that $u\leq z$, and if $\rsupp z\leq \rsupp w$, then $z\leq u\vee \rsupp w\leq w$ would contradict that $z\notin[u,w]$.
But $\rsupp z$ and $\rsupp w$ belong to the same chain, $\RBnd L$, so they are comparable, and we obtain that $\rsupp w < \rsupp z$. 
Hence, $w=\lsupp w\vee \rsupp w=u\vee\rsupp w\leq u\vee \rsupp z=\lsupp z\vee\rsupp z=z$. Now the inequality $w\leq z$ and $z\notin[u,w]$ imply that $\Foot\pp=w<z$. Taking the meet-irreducibility of $\Foot\pp$ into account, we have that $\Peak \pp\leq z$. Thus, 
$\lsupp{\Peak\pp}\leq \lsupp z$.
With the notation used in Lemmas \ref{lemma:nmtfrkstZ} and \ref{lemma:mfXzsrjsmG}, let $I_i$ be the lamp to which $\pp$ belongs. Then $\Peak\pp=\Peak {I_i}$, and it is clear in $L_i$ that $u=\lsupp{\Foot \pp} < \lsupp{\Peak I}=\lsupp{\Peak\pp}$. Since $L_i$ is a sublattice of $L$, the inequality $u<\lsupp{\Peak \pp}$ also holds in $L$. 
Combining this with the already established $\lsupp{\Peak\pp}\leq \lsupp z$, we obtain that 
$u<\lsupp z$. This contradicts the assumption 
$z\in \set{x\in L: \lsupp x=u}$ and proves \eqref{eq:znfLdjfrGlTkr}. 

Next,  for the sake of contradiction, suppose that $L'$ is not meet-closed. Pick elements $s,c,d\in L$ such that $s=c\wedge d$, 
$s\in F(\pp)=L\setminus L'$ but $c,d\notin F(\pp)$. By \eqref{eq:lcnFrhvLmTrcJflSr}, \eqref{eq:znfLdjfrGlTkr}, and symmetry, we can assume that $\lsupp s=\lsupp{\Foot p}$. Since the function $L\to\LBnd L$ defined by $t\mapsto \lsupp t$ is clearly an idempotent meet-endomorphism by \eqref{eq:mndLlspSgrnKpL},  
$\lsupp s=\lsupp c\wedge \lsupp d$. 
As $\LBnd L$ is a chain,  $\lsupp s \in\set{\lsupp c,\lsupp d}$. Let, say, $\lsupp s=\lsupp c$.
Then $\lsupp c=\lsupp{\Foot p}$, so \eqref{eq:lcnFrhvLmTrcJflSr} and \eqref{eq:znfLdjfrGlTkr} give that $c\in F(\pp)$, a contradiction.
\end{proof}

For $I\in\Lamp L$, let $\NTube  I =  \pNTube L I$ denote the number of neon tubes of $I$. The total number of neon tubes of $L$ is denoted by  $\ANTube L$, so
$\ANTube L:=\sum_{I\in\Lamp L}\NTube I$.

\begin{lemma}[Sandwiched Neon Tube Lemma]\label{lemma:middllunt} For a  slim rectangular lattice $L$, let $\nn_1$, $\pp$, and $\,\nn_2$ be three consecutive neon tubes of an internal lamp $I\in\Lamp L$ 
such that the original territory of $\pp$ is used but those of $\nn_1$ and $\nn_2$ are not used. Then there is a slim rectangular lattice $L'$ such that
$\Lamp {L'}\cong \Lamp L$ but $|L'|<|L|$  and   $\ANTube{L'}=\ANTube L -1$; in fact, there is an isomorphism $\phi\colon\Lamp L\to\Lamp{L'}$ such that $\NTube{\phi(I)}=\NTube I-1$ and $\NTube{\phi(J)}=\NTube J$ for all $J\in\Lamp L\setminus\set I$. 
\end{lemma}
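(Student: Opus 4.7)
The plan is to construct $L'$ by removing the fork associated with $\nn_1$: set $L':=L\setminus F(\nn_1)$, in the sense of \eqref{eq:lcnFrhvLmTrcJflSr}. (Symmetrically, $\nn_2$ would also work.) Lemma \ref{lemma:mThFgZzbBRt} provides $L'$ as a meet-subsemilattice of $L$, and since $0_L,1_L\in L'$, it is a bounded meet-semilattice, hence a lattice. I would equip $L'$ with the planar diagram inherited from $L$ by erasing the ``V''-shaped region $F(\nn_1)$ and letting the former $4$-cells flanking $\nn_1$ coalesce with their neighbours; the single neon tube $\nn_1$ disappears, and all other edges retain their slopes.

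To check that $L'$ is a slim rectangular lattice whose inherited diagram is a \tbdia-diagram, I would apply the characterization \eqref{eq:mnHhwlCh}: $L'$ is a $4$-cell lattice, distinct $4$-cells have distinct bottoms, and the only doubly irreducible elements of $L'$ are the corners $\lcorner L$ and $\rcorner L$ (the outer boundary is untouched because $\nn_1$ is internal). Semimodularity propagates since fork removal is the planar inverse of a fork extension in the sense of Cz\'edli and Schmidt \cite{czgscht-visual}, and the ``sandwiched'' hypothesis on $\pp$ guarantees that this local modification near $\nn_1$ does not interfere with any other lamp.

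Next, I would define $\phi\colon\Lamp L\to\Lamp{L'}$ by $\phi(J):=J$ for every $J\neq I$, and let $\phi(I)$ be the lamp in $L'$ with peak $\Peak I$ and neon tubes equal to those of $I$ minus $\nn_1$. For well-definedness I must verify $\Foot J\notin F(\nn_1)$ for each $J\neq I$: the only element of $F(\nn_1)$ that can be a lamp foot in $L$ must lie in $\EOT{\nn_1}$, which is forbidden by the unused-territory hypothesis via Lemma \ref{lemma:thtznjtVnPrm}(c), while $\Foot J=\Foot{\nn_1}$ is impossible because $I$ has at least three neon tubes so $\Foot I\neq\Foot{\nn_1}$ and, being determined by its foot (Cz\'edli \cite[Lemma 3.1]{CzGlamps}), no other lamp can sit at $\Foot{\nn_1}$ either. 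To show $\phi$ is an order-isomorphism I invoke Lemma \ref{lemma:vltfLm} ($\rhfoot=\rhotcr$ generates the lamp order). For two lamps both distinct from $I$, their feet, circumscribed rectangles, and illuminated areas coincide in $L$ and $L'$, so their $\rhfoot$-relations match. For pairs involving $I$, the essential original territories of the surviving neon tubes of $\phi(I)$ in $L'$ only enlarge by absorbing $\EOT{\nn_1}$; since no lamp had its foot in $\EOT{\nn_1}$, neither new nor missing $\rhfoot$-relations appear.

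Finally, $|L'|<|L|$ because $F(\nn_1)\ni\Foot{\nn_1}$, and $\ANTube{L'}=\ANTube L-1$ because exactly one neon tube is removed while none are created. The main obstacle is the structural step in the second paragraph: upgrading the meet-subsemilattice of Lemma \ref{lemma:mThFgZzbBRt} to a genuine slim rectangular lattice with a valid \tbdia-diagram. Tracking edge slopes and cell shapes around the former fork---ruling out two $4$-cells sharing a bottom after the coalescence, and preserving semimodularity precisely where cells merge---is where the bulk of the rigour lies, and it is exactly the sandwiched-between-two-unused-neon-tubes hypothesis on $\pp$ that confines the modification locally.
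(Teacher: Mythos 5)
You remove the wrong fork, and this is a genuine gap, not a cosmetic deviation: the paper deletes $F(\pp)$, the fork of the \emph{used} middle neon tube, and the ``sandwiched'' hypothesis exists precisely to make that deletion safe. The two strips that border $\Floor\pp=F(\pp)$ from below are $\REOT{\nn_1}$ and $\LEOT{\nn_2}$, so the unusedness of $\OT{\nn_1}$ and $\OT{\nn_2}$ (via Lemma \ref{lemma:thtznjtVnPrm}) guarantees that no precipitous edge has its peak on $F(\pp)$; this is what yields join-closedness, the merging of cells into $4$-cells, and the fact that, apart from $\pp$, only normal-slope edges disappear. Your set $F(\nn_1)=\Floor{\nn_1}$ enjoys no such protection: the strip immediately below $\LFloor{\nn_1}$ is exactly $\LEOT\pp$, and the hypothesis that the original territory of $\pp$ \emph{is} used means there may be a lamp $I'$ with $\CircR{I'}\subseteq\LEOT\pp$, whose peak then lies on $\LFloor{\nn_1}\subseteq F(\nn_1)$ --- it can even be $\Foot{\nn_1}$ itself (start from a grid, create $I$ with tubes $\nn_1,\pp,\nn_2$ by one multifork extension, then perform a multifork extension at the $4$-cell of $\LEOT\pp$ whose peak is $\Foot{\nn_1}$; all hypotheses of the lemma hold). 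In that situation $L\setminus F(\nn_1)$ deletes the common peak of the neon tubes of $I'$: it is not join-closed (the feet of those tubes survive but their join does not), the ``inherited diagram'' has precipitous edges with no upper endpoint, and your map $\phi(J)=J$ is not even defined for $J=I'$ because the pair $(\Foot{I'},\Peak{I'})$ no longer lies in $L'$. Your well-definedness check misses this because it only excludes lamp \emph{feet} from $F(\nn_1)$ and appeals to $\EOT{\nn_1}$, but $\Floor{\nn_1}$ is not contained in $\EOT{\nn_1}$, and the dangerous objects are peaks sitting in the used territory of $\pp$, which the unused hypotheses say nothing about. (Choosing $\nn_2$ instead fails symmetrically when the lamp using $\OT\pp$ sits in $\REOT\pp$.)

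There is a second loss even when no such $I'$ interferes: $\nn_1$ may be the leftmost neon tube of $I$, in which case its removal moves $\Foot I$ (the meet of the feet of the leftmost and rightmost tubes), so $\Str{L'}\neq\Str L$ and the one-line conclusion via \eqref{eq:NmdTlskDmbzK} is unavailable; one would then need the kind of careful re-drawing and re-verification of the lamp order that the paper only carries out in Lemma \ref{lemma:twnTbs}, and your sketch does not supply it. The repair is simple: delete $F(\pp)$ rather than $F(\nn_1)$. Then \eqref{eq:wGzkBkLkK} holds, $(\Foot I,\Peak I)$ is untouched because $\pp$ is a middle tube, and your outline essentially turns into the paper's proof.
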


\begin{proof} With reference to \eqref{eq:lcnFrhvLmTrcJflSr}, denote by $L'$ the subposet of $L$ that we obtain from $L$ by removing the fork $F(\pp)$ determined by $\pp$; see Figure \ref{fig-knpn} for an illustration. We are going to show that $L'$ does the job.
By left-right symmetry, we can assume that $\nn_1$ is to the left of $\pp$ and $\pp$ is to the left of $\nn_2$.

First, we prove that $L'$ is a sublattice.
By the main result of Cz\'edli \cite{CzGpropmeet}, 
\begin{equation}\left.
\parbox{8cm}{both intervals occurring in \eqref{eq:lcnFrhvLmTrcJflSr} are chains of normal slopes. Hence, by \eqref{eq:mNdkRkhmzsphvlL},  $F(\pp)=\Floor\pp$.}\,\,\right\}
\label{eq:szmRkhs}
\end{equation}
In Figure \ref{fig-knpn}, these chains are $[u_6, u_6\vee v_6]$ and   $[v_6, u_6\vee v_6]$. 
Since none of the original territories of $\nn_1$ and $\nn_2$ are used, we obtain from Lemma \ref{lemma:thtznjtVnPrm} that
\begin{equation}
\text{none of $\REOT{\nn_1}$ and  $\LEOT{\nn_2}$ contains a precipitous line segment.}
\label{eq:wGzkBkLkK}
\end{equation}
These two areas border $F(\pp)=\Floor\pp$ from below. Thus, for any edge $\rr$ of $L$,
\begin{equation}
\text{if $\Peak\rr\in F(\pp)$, then $\rr$ is of a normal slope.}
\label{eq:nsmmRtvnbHrjSt}
\end{equation}
For the sake of contradiction, suppose that $L'$ is not join-closed. Then we can pick $x',y'\in L'$ such that $z:=x'\vee y'\notin L'$, that is, $z\in F(\pp)$. (The join is taken in $L$.) By \eqref{eq:lcnFrhvLmTrcJflSr} and left-right symmetry, we can assume that $z\in[\lsupp {\Foot \pp}, \Foot \pp]$. 
In Figure \ref{fig-knpn}, the situation is illustrated with $z$ as the (unique) element drawn by a lying oval. 
Let $T:=[\lsupp {\Foot{\nn_2}},  \Foot{\pp}]$ in $L$; it is $[u_5, u_6\vee v_6]$ in Figure \ref{fig-knpn}. The (area determined by) $T$ is $\LEOT{\nn_2}\subseteq \EOT{\nn_2}$. Hence, by \eqref{eq:wGzkBkLkK}, $T$ contains no precipitous line segment. Furthermore,
as a lattice interval, 
\begin{equation}
\text{$T$ is the direct product of a chain and the two-element chain.}
\label{eq:mhgPrcmgrndkKns}
\end{equation}
Hence, $z$ has only two lower covers, $x$ and $y$ (the standing ovals in the figure), and the edges $[x,z]$ and $[y,z]$ are of normal slopes. Let, say, $x$ be to the left of $y$. Now $x',y'\in \pideal L z\setminus\set z$, but $\set{x',y'}\nsubseteq \pideal L {y}$ since otherwise $z=x'\vee y'\leq y\prec z$ would be a contradiction. Hence, at least one of $x'$ and $y'$ is in $\pideal L z\setminus \pideal L y\subseteq [\lsupp {\Foot \pp}, \Foot \pp]\subseteq F(\pp)$, contradicting that $x',y'\in L'=L\setminus F(\pp)$. Therefore, $L'$ is closed with respect to joins. Since it is also closed with respect to meets by Lemma \ref{lemma:mThFgZzbBRt}, we have proved that 
 $L'$ is a sublattice of $L$. 

Let $\intv e$ be an edge in the interval $[\lsupp{\Foot\pp},\Foot\pp]$ distinct from the top edge of this interval. Using \eqref{eq:mhgPrcmgrndkKns}, it is clear that if we merge the two 4-cells that share $\intv e$ as a common side, we obtain a 4-cell of $L'$. The situation is similarly for the non-top edges of $[\rsupp{\Foot\pp},\Foot\pp]$. The top edges of these two intervals disappear when $\Foot\pp$ and its two lower covers are omitted and three ``old'' 4-cells merge into a ``new'' 4-cell of $L'$. Now that we have described the new 4-cells, it follows from \eqref{eq:mnHhwlCh} that $L'$ is a slim rectangular lattice. 

It is clear by the paragraph above that with the exception of $\pp$, only some edges of normal slopes are removed when passing from $L$ to $L'$. The removal of $\pp$ does not influence the pair $(\Foot I, \Peak I)$ since $\Foot I$ is the meet of the feet of the leftmost neon tube and the rightmost neon tube of $I$ but $\pp$ is a ``middle'' neon tube of $I$. Therefore, $\Str{L'}=\Str L$, see \eqref{eq:zgtlnRdkrtkZnt}, and so \eqref{eq:NmdTlskDmbzK} implies that $\Lamp{L'}\cong \Lamp L$. Finally, since only one neon tube, $\pp$, has been removed,  $\ANTube{L'}=\ANTube L -1$. The existence of $\phi$ is clear: for $J\in\Lamp L$,  $\phi(J)$ is defined by the property $(\Foot{\phi(J)},\Peak{\phi(J)})=(\Foot J, \Peak J)$.
The proof of Lemma \ref{lemma:middllunt} is complete.
\end{proof}

\begin{figure}[ht] \centerline{ \includegraphics[width=0.98\textwidth]{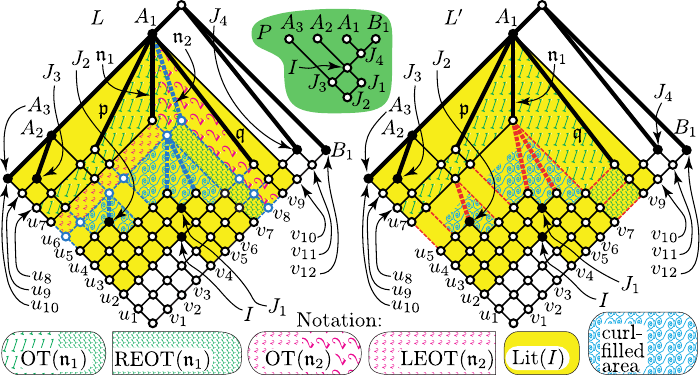}} \caption{Illustrating the proof of Lemma \ref{lemma:twnTbs} by $\Lamp L\cong P\cong\Lamp{L'}$}\label{fig-nn}
\end{figure}

\begin{lemma}[No Neighboring Neon Tubes Lemma]\label{lemma:twnTbs} 
Let $L$ be a  slim rectangular lattice. Assume that  $\nn_1$ and $\,\nn_2$ are two neighboring neon tubes of an internal lamp $I\in\Lamp L$ such that their original territories are not used.
Then there exists a slim rectangular lattice $L'$ such that  $|L'|<|L|$ and 
$(\Lamp{L'};\leq)\cong (\Lamp{L};\leq)$
but $|\ANTube{L'}| = |\ANTube{L}|-1$; in fact, 
there is an order isomorphism $\phi\colon (\Lamp L;\leq) \to (\Lamp{L'};\leq)$ such that $|\NTube {\phi(I)}|=
|\NTube I|-1$ but $|\NTube {\phi(K)}|= |\NTube K|$  for any $K\in\Lamp L\setminus\set{I}$.
\end{lemma}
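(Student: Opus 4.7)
The plan is to adapt the fork-removal construction of Lemma \ref{lemma:middllunt}. Assume without loss of generality that $\nn_1$ lies to the left of $\nn_2$, and set $L' := L \setminus F(\nn_2)$ (the alternative $L' := L \setminus F(\nn_1)$ being symmetric). I will then verify in turn that $L'$ is a slim rectangular lattice and that the natural map $\phi$ that fixes every $J \in \Lamp L \setminus \set{I}$ and sends $I$ to the lamp $I'$ of $L'$ (with $\Peak{I'}=\Peak I$ and one neon tube fewer) is an order isomorphism.

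The sublattice argument parallels Lemma \ref{lemma:middllunt} closely. Meet-closure is immediate from Lemma \ref{lemma:mThFgZzbBRt}. For join-closure I argue by contradiction: if $x',y' \in L'$ with $z := x' \vee y' \in F(\nn_2)$, the left-right symmetry of $F(\nn_2)$ allows one to assume $z \in [\lsupp{\Foot{\nn_2}}, \Foot{\nn_2}]$; the critical lattice interval $T$ lies in $\REOT{\nn_1}\cup\LEOT{\nn_2}$, and since \emph{both} $\nn_1$ and $\nn_2$ have unused original territories, Lemma \ref{lemma:thtznjtVnPrm} rules out precipitous edges in both $\REOT{\nn_1}$ and $\LEOT{\nn_2}$. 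Hence, as in \eqref{eq:mhgPrcmgrndkKns}, $T$ is a direct product of a chain with the two-element chain, so $z$ has exactly two lower covers of normal slope, and the contradiction at the end of Lemma \ref{lemma:middllunt}'s proof applies verbatim. That $L'$ is a slim rectangular lattice then follows from \eqref{eq:mnHhwlCh} via the usual 4-cell merging, and the numerical conclusions $|L'|<|L|$ and $\ANTube{L'}=\ANTube L -1$ are read off directly.

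The main obstacle is showing that $\phi$ is an order isomorphism. Unlike in Lemma \ref{lemma:middllunt}, the removed $\nn_2$ may be a corner (leftmost or rightmost) neon tube of $I$, in which case $\Foot{I'}$ strictly exceeds $\Foot I$; one cannot always dodge this by removing $\nn_1$ instead, for instance when $I$ has exactly two neon tubes. Hence the clean equality $\Str{L'}=\Str L$ of \eqref{eq:zgtlnRdkrtkZnt} fails in general and \eqref{eq:NmdTlskDmbzK} cannot be applied directly. I would instead appeal to Lemma \ref{lemma:vltfLm} and verify that $\phi$ preserves the generating relation $\rhfoot$ in both directions. The only non-trivial cases involve $I$: one must check $(K,I)\in\rhfoot \iff (K,I')\in\rhfoot$ for $K \neq I$, and $(I,J)\in\rhfoot \iff (I',J)\in\rhfoot$ for $J \neq I$. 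Using Lemma \ref{lemma:thtznjtVnPrm} and the unused hypothesis, no lamp foot other than $\Foot I$ itself enters the essential territories $\EOT{\nn_1}$ or $\EOT{\nn_2}$, while planarity together with \eqref{eq:lkFnrmRgkCt} confines the discrepancy between $\Foot I$ and $\Foot{I'}$ to this unused region. Consequently the relevant containments in $\Enl J$ and in $\Enl I$ agree on both sides of $\phi$, completing the verification.
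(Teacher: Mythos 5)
Your construction breaks down at its decisive step: $L':=L\setminus F(\nn_2)$ is \emph{not} join-closed in general, so the sublattice route of Lemma \ref{lemma:middllunt} is not available here. The point is which slabs border the removed fork from below. In the Sandwiched Neon Tube Lemma, the slab under $\LFloor\pp$ is $\LEOT{\nn_2}$ for the \emph{right} neighbor $\nn_2$ of the removed tube and the slab under $\RFloor\pp$ is $\REOT{\nn_1}$ for the \emph{left} neighbor, and the hypothesis there controls exactly these two slabs. When you remove $F(\nn_2)$ instead, the slab under $\RFloor{\nn_2}$ is indeed $\REOT{\nn_1}$ (this is what \eqref{eq:XlsnmmrdhlsfTRhs} exploits), but the slab under $\LFloor{\nn_2}$ is $\LEOT{\qq}$ for the right neighbor $\qq$ of $\nn_2$ (or the corresponding area when $\nn_2$ is the rightmost neon tube of $I$) --- not $\LEOT{\nn_2}$, which lies \emph{above} $\LFloor{\nn_2}$, between $\LFloor{\nn_1}$ and $\LFloor{\nn_2}$. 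Nothing in the hypothesis controls that slab: unusedness of $\OT{\nn_2}$ only forbids precipitous segments inside $\EOT{\nn_2}$, and $\LFloor{\nn_2}$ is merely the lower border of $\LEOT{\nn_2}$, so internal neon tubes may hang downward from $\LFloor{\nn_2}$ (the lamps $J_1$ and $J_2$ in Figure \ref{fig-nn} do exactly this). Hence the analogue of \eqref{eq:nsmmRtvnbHrjSt} fails, your interval $T$ need not be a grid, and join-closure is genuinely false: in Figure \ref{fig-nn} one has $u_4,v_6\in L\setminus F(\nn_2)$ while $u_4\vee_L v_6\in F(\nn_2)$. Removing $F(\nn_1)$ instead fails for the mirror reason, since the slab under $\RFloor{\nn_1}$ belongs to the left neighbor of $\nn_1$, which is again uncontrolled.

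Because of this, the rest of your argument has nothing to stand on: merging 4-cells cannot produce a \tbdia-diagram here (the lamps with peaks on $\LFloor{\nn_2}$ would lose their peaks), so slim rectangularity, the neon-tube count, and the lamp comparison are not obtained the way you claim. What is actually needed is a different surgery: keep the vertex set $L\setminus F(\nn_2)$ but replace every edge whose peak lies on $\Floor{\nn_2}$ by a new edge with a lifted peak, namely $\Peak\rr\vee_L\lsupp{\Foot{\nn_1}}$ on the left side and $\Peak\rr\vee_L\rsupp{\Foot{\qq}}$ on the right, as in \eqref{eq:sRgTbrhtzc}--\eqref{eq:sRgTbrhtzd}; then prove planarity of the resulting diagram by a case analysis on precipitous versus normal changing edges, invoke Kelly and Rival \cite{KR75} to conclude it is a lattice, verify that it is a slim rectangular \tbdia-diagram, and only afterwards compare $\Lamp L$ with $\Lamp{L'}$ via $\rhfoot=\rhotcr$. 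Your closing idea of checking $\rhfoot$ in both directions with Lemmas \ref{lemma:vltfLm} and \ref{lemma:thtznjtVnPrm} is in the right spirit and correctly anticipates that $\Str{L'}=\Str L$ may fail, but it presupposes the flawed construction, so the gap is not a missing detail but the choice of $L'$ itself.
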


\begin{proof} 
The proof borrows some ideas from Cz\'edli \cite{CzG-qdiagr}. Note, however, that the present situation  is different from that in  \cite{CzG-qdiagr} since now $L'$, to be defined below, is not a quotient lattice of $L$ in general.

Let, say, $\nn_2$ be to the right of $\nn_1$; see Figure \ref{fig-nn} for an illustration. 
Observe that, by Lemma \ref{lemma:thtznjtVnPrm} (or see the figure) and the fact that  $\REOT{\nn_1}$ is not used, 
\begin{equation}\left.
\parbox{10.5cm}{the peak of no precipitous edge of $L$  belongs to $\RFloor {\nn_2}$ and, in particular,  $\Foot{\nn_2}$ cannot be the peak of a precipitous edge of $L$.}\,\,\right\}
\label{eq:XlsnmmrdhlsfTRhs}
\end{equation} 
Keeping Convention \ref{conv:tbdD} in mind, we define $L'$ by describing its \tbdia-diagram. From (the diagram of) $L$, we remove the fork $F(\nn_2)$ together with all edges that have one or two endpoints in $F(\nn_2)$.  Writing this formally, $L'=L\setminus F(\nn_2)$. 
On the left of Figure \ref{fig-nn}, the vertices 
to be omitted are drawn in blue while the edges to be omitted are the blue dashed edges. Let $L'$ be the set of the remaining vertices (drawn in black). (Note that $L'$ in Figure \ref{fig-nn} is not a sublattice of $L$ since $u_4,v_6\in L'$ but $u_4\vee_L v_6\notin L'$.)
At this stage, $L'$ with the remaining (black solid) edges is not even a lattice diagram. 

Next, let $\qq$ denote the right neighbor of $\nn_2$ among the neon tubes of $I$ or, if $\nn_2$ is the rightmost neon tube of $I$, then let $\qq$ be the upper right edge of $\CircR I$. Actually, it is only $\Foot\qq$ that we will need, and it is the right neighbor of $\Foot{\nn_2}$ among the lower covers of $\Peak{\nn_2}=\Peak I$. For each edge $\rr$ of $L$,  we define or not define an edge $\rr'$ of $L'$ as follows.
\allowdisplaybreaks{
\begin{align}
\left.
\parbox{10cm}{If $\Foot\rr\in\Floor{\nn_2}$, then $\rr'$ is undefined and $\rr$ is called an \emph{omitted old edge}.}\,\,\right\} \label{eq:sRgTbrhtza}
\\
\left.
\parbox{10cm}{If  $\Foot\rr\notin\Floor{\nn_2}$ and $\Peak\rr\notin \Floor{\nn_2}$, then $\rr':=\rr$  and $\rr$ is called a \emph{remaining old edge} of $L'$.}\,\,\right\} \label{eq:sRgTbrhtzb}
\\
\left.\parbox{10cm}{If $\Foot\rr\notin\Floor{\nn_2}$ and 
$\Peak\rr\in \LFloor{\nn_2}$, 
then let $\Foot{\rr'}:=\Foot{\rr}$ and $\Peak{\rr'}:=\Peak{\rr}\vee_L \lsupp{\Foot{\nn_1}}$.}\,\,\right\} \label{eq:sRgTbrhtzc}
\\
\left.
\parbox{10cm}{If $\Foot\rr\notin\Floor{\nn_2}$ and $\Peak\rr\in \RFloor{\nn_2}$, then let  $\Foot{\rr'}:=\Foot{\rr}$ and $\Peak{\rr'}:=\Peak{\rr}\vee_L \rsupp{\Foot{\qq}}$.}\,\,\right\} \label{eq:sRgTbrhtzd}
\end{align}}%
If $\rr$ is in the scope of \eqref{eq:sRgTbrhtzc} or \eqref{eq:sRgTbrhtzd}, then 
 $\rr'$ and $\rr$ are called a \emph{new edge} and a \emph{changing old edge}, respectively. 
In Figure \ref{fig-nn}, $\lsupp{\Foot{\nn_1}}=u_7$, $\rsupp\qq=v_9$, and the new edges are the red dashed ones.
It follows from \eqref{eq:XlsnmmrdhlsfTRhs} that  each edge $\rr$ of $L$ belongs to the scope of exactly one of \eqref{eq:sRgTbrhtza}--\eqref{eq:sRgTbrhtzd}.
With its new edges and the remaining old ones, $L'$ turns into a Hasse diagram of a poset $L'=(L;\leq)$, which is a subposet of $L=(L;\leq)$.
Actually, we need to verify that the diagram is a poset diagram. We need to show that no two edges of the new diagram overlap; this will be done a bit later. We also need to show that for every edge $[x,y]$ of the new diagram $L'$, there are no edges $[x,z_1]$, $[z_1,z_2]$, \dots, $[z_{k-1},y]$ of $L'$ for some $k\geq 2$. This is clear if $[x,y]$ is a new edge, as the only possible $z_1\in L$ is not in $L'$; the case when $[x,y]$ is a remaining old edge is even more obvious.  
To exclude overlapping edges and to show that the poset $L'$ is actually (the diagram of) a slim rectangular lattice, we have to work more. Since none of the original territories $\OT{\nn_1}$ and $\OT{\nn_2}$ is used, Lemmas \ref{lemma:nmtfrkstZ} and \ref{lemma:mfXzsrjsmG} imply the following.
\begin{equation}\left.
\parbox{9.3cm}{Let $i\in\set{1,2}$. Then  every edge $\rr$ in $\LEOT{\nn_i}$ is either of (normal) slope $(1,1)$ and lies on the boundary of $\LEOT{\nn_i}$ or $\rr$ is of (normal) slope $(1,-1)$. Similarly, every edge $\rr$ in $\REOT{\nn_i}$ is either of (normal) slope $(1,-1)$ and lies on the boundary of $\REOT{\nn_i}$ or $\rr$  is of (normal) slope $(1,1)$.}\,\,\right\}
\label{eq:jFszPsmjRzrF}
\end{equation}
Hence, even though $L$ can be more complicated in general than in Figure \ref{fig-nn}, the original territories indicated by appropriate fill patterns in the figure reflect the general case well. The new edges of $L'$, which originate from changing old edges of $L$,  belong to three categories, which will be discussed separately.

\begin{category}\label{categ1}
We assume that $\rr$ is a precipitous edge in the scope of  \eqref{eq:sRgTbrhtzc}. Then $\rr$ is a neon tube of a lamp $J\in\Lamp L$ such that $\Peak J=\Peak\rr$ lies on $\LFloor {\nn_2}$. In Figure \ref{fig-nn}, $J$ can be $J_1$ or $J_2$. It follows from 
\eqref{eq:jFszPsmjRzrF} that we obtain $\rr'$ from $\rr$ by moving the peak of $\rr$ to the northwest along an edge of slope $(1,-1)$. Thus, using that $\rr$ is precipitous, it follows trivially that  $\rr'$ is also precipitous; for more details, the reader can (but need not) see  \cite[(6.8)]{CzG-qdiagr}. Since no precipitous edge will occur in other categories for changing edges, 
let us summarize for later references that
\begin{equation}\left.
\parbox{9.8cm}{if a precipitous old edge $\hh$ of $L$ is a changing edge, then it changes to a precipitous new edge  $\hh'$ and $\Foot{\hh'}=\Foot\hh$.}\,\,\right\}
\label{eq:rGmnvmDlznpj}
\end{equation}

A line or an edge is of a \emph{slight slope} if it is parallel to the vector $(1,t)$ for some $t\in\RR$ such that $|t|<1$. That is, a line or edge is of a slight slope if and only if it is neither of a normal slope nor precipitous. 
We know from \cite[(6.9)]{CzG-qdiagr} (and it is easy to see) that 
\begin{equation}\left.
\parbox{8cm}{if $\ell$ is a (geometric) line through two distinct lower covers of $\Peak J$, then $\ell$ is of a slight slope.}\,\,\right\}
\label{eq:phfSzkRsvStkL} 
\end{equation}

Next, let $\UHCircR J$ stand for  the union of the 4-cells whose peaks are $\Peak J$; it is a geometric area. (The acronym, taken from \cite{CzG-qdiagr}, comes from ``upper half of the circumscribed rectangle''.) For $J\in \set{J_1,J_2}$ in Figure \ref{fig-nn}, $\UHCircR J$ in $L$ is curl-filled. 
Note that on the right of the figure, the curl-filled areas are
$\UHCircR {J_1}$ and  $\UHCircR {J_2}$ understood in $L$ but not in $L'$.
It follows from Lemmas \ref{lemma:nmtfrkstZ} and \ref{lemma:mfXzsrjsmG} (and, in a different terminology, it is explicitly stated in \cite[(6.3)]{CzG-qdiagr}) that
\begin{equation}\left.
\parbox{7cm}{$\GInt{\UHCircR J}$ contains no edge segment that is not a part of a neon tube of $J$.}
\label{eq:wCnflTgwrJkp}\,\,\right\}
\end{equation}

Practically, \eqref{eq:wCnflTgwrJkp} means that the curl-filled areas in the figure reflect generality well. Let $\hh'$ be an edge of $L'$ such that $\hh'\neq\rr'$. 
Since neither the curl-filled area $\GInt{\UHCircR J}$  nor  
the 4-cell of $\LEOT{\nn_2}$ that
is the upper left neighbor of $\CircR J$
contains an edge of $L$ not 
mentioned in \eqref{eq:wCnflTgwrJkp}, $\rr'$ neither crosses nor overlaps $\hh'$ if $\hh$ is  of a normal slope. 
Next, assume that $\hh$ is precipitous and so it is a neon tube and $\hh$ belongs to $J$, that is, to the same lamp to which $\rr$ belongs. As $\Peak{\hh'}=\Peak{\rr'}$, the edges $\hh'$ and $\rr'$ do not cross.  
It follows from \eqref{eq:phfSzkRsvStkL} (applied to the common geometric line that contains both $\hh'$ and $\rr'$) that $\hh'$ and $\rr'$ do not overlap. 
In the remaining case when $\hh$ is precipitous but not a neon tube  of $J$ and $\Peak\hh\in\LFloor{\nn_2}$, then let $K$ denote the lamp having $\hh$ as a neon tube. Then $K$ is an internal lamp and $K\neq J$. Since an internal lamp is clearly determined by its peak, $\Peak J\neq \Peak K$, and they are comparable since $\LFloor{\nn_2}$ where they belong is a chain by \eqref{eq:szmRkhs}. The role of $J$ and $K$ is interchangeable, so let $\Peak K<\Peak J$.  Then (the line determined by) $\RRoof K$ separates $J$ and $K$, and we obtain easily again that 
$\rr'$ and $\hh'$ neither cross nor overlaps. We have seen that
\begin{equation}\left.
\parbox{8.3cm}{if $\rr'$ originates from a precipitous edge $\rr$ of $L$, then $\rr'$ neither crosses nor overlaps any other edge of $L'$.}\,\,\right\}
\label{eq:kTsKrbjDmNsdBbzTs}
\end{equation}
\end{category}

\begin{category}\label{categ2} 
We assume that $\rr$ is of a normal slope and $\rr'$ is defined in \eqref{eq:sRgTbrhtzc}. Then $b:=\Peak{\rr'}\in L$ even though $\rr'$ is not an edge of $L$. It is clear either by Lemmas \ref{lemma:nmtfrkstZ} and \ref{lemma:mfXzsrjsmG} or by comparing the present situation to 
\eqref{eq:mhgPrcmgrndkKns} that $\Peak \rr \prec_L b$. Hence, $\intv d :=[\Peak\rr, b]$ is an edge. This edge lies in $\LEOT{\nn_2}$, and we obtain from \eqref{eq:jFszPsmjRzrF} that $\intv d$ is of slope $(1,-1)$.  So is $\rr$ since it is of a normal slope but does not lie on $\LFloor{\Foot{\nn_2}}$. This means that 
$\rr'$ comes to existence by merging $\rr$ and $\intv d$, which are adjacent edges lying on the same line of slope $(1,-1)$. Hence, $\rr'$ is also of slope $(1,-1)$. Therefore, since Category \ref{categ3} will be analogous to the current one by left-right symmetry and we are armed with \eqref{eq:rGmnvmDlznpj}, we can conclude even now that
\begin{equation}\left.
\parbox{9.3cm}{if $\intv g$ is a changing old edge of a normal slope, than the edge $\intv g'$ of $L'$ is of the same (normal) slope and, furthermore, $\intv g'$ is obtained by merging two collinear adjacent edges of $L$.}\,\,\right\}
\label{eq:rBwWrhnhntgRKtt}
\end{equation}
It follows from
\eqref{eq:kTsKrbjDmNsdBbzTs} and  \eqref{eq:rBwWrhnhntgRKtt} that if $\rr'$ crossed or overlapped an edge $\intv g'$ of $L'$, then $\intv g'$ would be of the other normal slope, $(1,1)$, and it would come to existence by merging $\intv g$ to a collinear other edge of $L$ at $b$. But then $\intv g$ would lie on $\RFloor{\nn_2}$ and instead of merging it to a collinear edge to obtain $\intv g'$, 
 $\intv g$ would have been omitted. Thus,
\begin{equation}\left.
\parbox{7cm}{if $\rr$ belongs to Category \ref{categ2}, then $\rr'$ neither crosses nor overlaps any other edge of $L'$.}\,\,\right\}
\label{eq:wwRtlmJncPtnsG}
\end{equation}
\end{category}

\begin{category}\label{categ3} We assume that $\rr$ is in the scope of \eqref{eq:sRgTbrhtzd}. By \eqref{eq:XlsnmmrdhlsfTRhs}, $\rr$ is of (a normal) slope $(1,1)$. Hence, the situation is basically the left-right symmetric counterpart of the one discussed in Category \ref{categ2}, whereby no details will be given.
\end{category}

Now that the three categories have been investigated, \eqref{eq:kTsKrbjDmNsdBbzTs}, \eqref{eq:wwRtlmJncPtnsG}, and the left-right symmetric counterpart of \eqref{eq:wwRtlmJncPtnsG} for Category \ref{categ3} imply that 
$L'$ is a \emph{planar} Hasse-diagram. 
We know from Kelly and Rival \cite[Corollary 2.4]{KR75} that planar posets with 0 and 1 are lattices. Hence, $L'$ is a planar lattice.
By construction, the number of upper covers of an element $x\in L'$ is the same in $L'$ as in $L$. Furthermore, an element of $L'$ belongs to the boundary of $L'$ if and only if it belongs to the boundary of $L$. 
Therefore, \eqref{eq:mnHhwlCh} and
the construction of $L'$ yield in a straightforward but a bit tedious way that $L'$ is a slim rectangular lattice.  

Since $x\in L'$ has the same number of covers in $L'$ as in $L$, we obtain that $\Mir{L'}=L'\cap\Mir L$. Moreover, we already have  \eqref{eq:rGmnvmDlznpj} and \eqref{eq:rBwWrhnhntgRKtt}, and it is clear that  an edge  $\rr'$ of $L'$ lies on  $\Bnd{L'}$ if and only if it lies on $\Bnd L$. Clearly, $\lcorner L,\rcorner L\in L'$. Therefore, 
taking the just mentioned facts of the present paragraph and  Convention \ref{conv:tbdD} (for $L$) into account, we conclude that $L'$ is (given by) a \tbdia-diagram.  

Since $\OT{\nn_2}$ is not used, it follows from \eqref{eq:szmRkhs} and   Lemma \ref{lemma:thtznjtVnPrm} that 
\begin{equation}
\text{if $\hh$ is a neon tube of $L$ and $\hh\neq\nn_2$, then $\Foot\hh\notin F(\nn_2)=\Floor{\nn_2}$.}
\label{eq:kcSgZnKgncsZs}
\end{equation}
It follows from \eqref{eq:rGmnvmDlznpj}, \eqref{eq:rBwWrhnhntgRKtt}, and the construction of $L'$ that
\begin{equation}\left. 
\parbox{9.5cm}{the neon tubes of $L'$ are exactly the $\rr'$ where $\rr$ is a neon tube of $L$ and $\rr\neq \nn_2$.  Furthermore, for neon tubes $\rr$ and $\hh$ of $L$ such that $\rr\neq\nn_2\neq \hh$, $\Peak{\rr'}=\Peak{\hh'}$ if and only if $\Peak\rr=\Peak\hh$ and $\Foot{\rr'}=\Foot\rr$.}\,\,\right\}
\label{eq:BHtgjLrSzglT}
\end{equation}
Hence, for a lamp $K\in\Lamp L\setminus\set I$, $\set{\rr':\rr\text{ is a neon tube of }K}$ is exactly the collection of neon tubes of a lamp $K'$ of $L'$. Furthermore, $\{\hh: \hh$  is a neon tube of $I$  and $\hh\neq\nn_2\}$ is the set of neon tubes of an internal lamp $I'$ of $L'$ --- this is the definition of $I'$. 
Note that Lemma \ref{lemma:mThFgZzbBRt} and \eqref{eq:BHtgjLrSzglT} give that  $\Foot{K'}=\Foot K$ for $K\in \Lamp L\setminus\set I$.  Now  \eqref{eq:BHtgjLrSzglT} 
and the facts mentioned thereafter allow us to conclude that the function $\phi\colon\Lamp L\to\Lamp{L'}$ defined by
\begin{equation}
K\mapsto 
\begin{cases}
K'&\text{if $K'\in\Lamp{L'}$ such that $\Foot {K'}=\Foot K$,}\cr
I'&\text{if $K=I$}
\end{cases}
\label{eq:kTngdfhkBlJt}
\end{equation}
is bijective. (Remark that if $\nn_2$ is not the rightmost neon tube of $I$, then $I$ belongs to the scope of both lines of \eqref{eq:kTngdfhkBlJt}.)
Note the rule, which follows from \eqref{eq:BHtgjLrSzglT}: for any $K\in\Lamp L$,  we have that $\Peak{\phi(K)}=\Peak K$. 

We know from Lemma \ref{lemma:vltfLm} that, in order to see that $\phi$ is an order isomorphism, it suffices to show that, for $J,K\in\Lamp K$, 
\begin{equation}
(J,K)\in\rhfoot \iff (J',K')\in\rhfoot.
\label{eq:glHrvlBkmlnDlj}
\end{equation} 

Assume that $(J,K)\in\rhfoot$ and $J\neq I$. Since $\Peak {K'}$ is to the northwest (that is, to the $(-1,1)$ direction) of $\Peak K$ or $\Peak{K'}=\Peak K$, we have that $\Enl K\subseteq \Enl {K'}$. 
Hence, $\Foot {J'}=\Foot J \in\Enl K\subseteq \Enl{K'} $ gives the required 
 $(J',K')\in\rhfoot$. If $(I,K)\in\rhfoot$, then $\CircR {I'}=\CircR I\subseteq \Enl K\subseteq \Enl{K'}$ by Lemma \ref{lemma:vltfLm}, whereby $(I',K')\in\rhcircr=\rhfoot$, as required. This proves the ``$\then$'' part of \eqref{eq:glHrvlBkmlnDlj}.

Next, assume that $(J',K')\in\rhfoot$ and $I\notin\set{J,K}$. We know that $\Foot{K'}=\Foot K$ and $\Foot{J'}=\Foot J$. 
If $\Peak{K'}=\Peak K$, then 
$\Foot{J}=\Foot{J'}\in\Enl{K'}=\Enl{K}$ gives the required $(J,K)\in\rhfoot$. 
So assume that $\Peak{K'}\neq\Peak K$. 
By construction, $\Enl {K'}\subseteq \Enl K\cup \LEOT{\nn_2}$; see Figure \ref{fig-nn}. Hence, $\Foot{J}=\Foot{J'}\in\Enl{K'}$ gives that $\Foot J\in \Enl K$ or $\Foot J\in\LEOT{\nn_2}$. 
If the second alternative, $\Foot J\in\LEOT{\nn_2}$, holds, then $\Foot J\subseteq \EOT{\nn_2}$, which contradicts Lemma \ref{lemma:thtznjtVnPrm} as $\OT{\nn_2}$ is not used. 
Hence, $\Foot J\in \Enl K$, which gives that $(J,K)\in\rhfoot$, as required. 

We are left with the case when one of $J$ and $K$ is $I$.

Assume that $(J',I')\in\rhfoot$. Then 
$\Foot J=\Foot {J'}\in\Enl{I'}\subseteq\Enl I$ gives the required $(J,I)\in\rhfoot$. (Note that $\Enl {I'}\subset \Enl I$ if $\nn_2$ is the rightmost neon tube of $I$, and $\Enl {I'}= \Enl I$ otherwise.)

Finally,  assume that $(I',K')\in\rhfoot$. 
Then  $(I',K')\in\rhcircr$ by  Lemma \ref{lemma:vltfLm}. This fact and $\CircR I=\CircR{I'}$ give that 
\begin{equation*}\Peak I=\Peak{\CircR{I}}=\Peak{\CircR{I'}}
\in\CircR{I'} \subseteq \Enl {K'}.
\end{equation*}
Hence, $(\Foot{K'},\Peak{K'})=(\Foot{K},\Peak{K})$, and so $\Enl{K'}=\Enl K$. These facts lead to $\CircR I=\CircR {I'}\subseteq \Enl {K'}=\Enl K$.  Thus, $(I,K)\in\rhcircr=\rhfoot$, as required. The proof of Lemma \ref{lemma:twnTbs} is complete.
\end{proof}

\section{An estimate}
The \emph{length} of a lattice $K$ is denoted by $\len K$.  Our goal is to prove that

\begin{theorem}\label{thm:est} 
Let $D$ be a ConSPS-representable distributive lattice with 
$n:=|\Jir D|$  join-irreducible elements. If $n\in\set{0,1}$, then $D$ is the $(n+1)$-element chain and $K\cong D$.
If $n=2$, then $D$ is the four-element boolean lattice and either $K\cong D$ or $K$ is the three-element chain.
If   $n\geq 3$,  then the following two assertions hold.

\textup{(A)} There is a slim rectangular lattice $L$ such that  $\Con L\cong D$ and 
\begin{equation}
\len L\leq 2n^2-10n + 15,\quad\text {and so }\quad \len L < 2n^2. 
\label{eq:MzlrGnvhPtBl}
\end{equation}

\textup{(B)} For any slim semimodular lattice $L'$, if \ $\Con{L'}\cong D$, then $\len{L'}\geq n$.
\end{theorem}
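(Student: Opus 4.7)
The proof splits by the value of $n$. The cases $n\le 2$ are handled directly: when $n=0$ or $n=1$, the number of join-irreducibles pins down $D$ as the one- or two-element lattice and forces $K\cong D$; when $n=2$, among the two distributive lattices with two join-irreducibles one must check that only $\mathbf{2}^2$ arises as $\Con K$ for a slim semimodular $K$ (equivalently, no slim semimodular lattice has a two-element chain as its lamp poset), and then the two candidates for $K$ can be listed by direct inspection. For Part~(B) with $n\ge 3$, the classical semimodular inequality $|\Jir{\Con{L'}}|\le \len{L'}$ suffices: every maximal chain of a semimodular lattice meets each projectivity class of prime intervals, and distinct join-irreducible congruences correspond to distinct classes, whence $\len{L'}\ge n$.

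For Part~(A), the strategy is to start from a slim rectangular lattice $K_0$ with $\Con{K_0}\cong D$ (we may take $K_0=K$ if $K$ is already rectangular, and replace $K$ by its rectangular hull otherwise) and then to apply Lemmas~\ref{lemma:twnTbs} and \ref{lemma:middllunt} iteratively. Each application strictly decreases $\ANTube{\cdot}$ while leaving $(\Lamp{\cdot};\le)$ unchanged up to isomorphism, so by Lemma~\ref{lemma:vltfLm} the congruence lattice is preserved. Because $\ANTube{\cdot}$ is a nonnegative integer, the iteration terminates at a slim rectangular lattice $L$ with $\Con L\cong D$ to which neither lemma applies.

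In this terminal $L$, for every lamp $J\in\Lamp L$ with $k_J=\NTube J$ neon tubes, the non-applicability of Lemma~\ref{lemma:twnTbs} forbids two consecutive \emph{unused} neon tubes of $J$, while the non-applicability of Lemma~\ref{lemma:middllunt} forbids the pattern \emph{unused--used--unused}. Together these forbid two unused neon tubes from occurring at positional distance less than $3$ in the left-to-right ordering of $J$'s neon tubes; so writing $u_J$ and $n_J=k_J-u_J$ for the number of used and unused tubes of $J$, one gets $n_J\le(k_J+2)/3$, equivalently $k_J\le(3u_J+2)/2$. Since $\len L=\ANTube L=\sum_J k_J$ in a slim rectangular lattice and since each used tube of $J$ is witnessed by an internal lamp $I$ with $(I,J)\in\rhfoot$ (and witnesses for different tubes of the same $J$ have feet in disjoint essential original territories, hence are distinct), the total $\sum_J u_J$ is bounded by $|\rhfoot|\le\binom{n}{2}$.

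The main obstacle is extracting the precise constant $2n^2-10n+15$. A direct estimate based on the bounds above already yields $\len L\le(3n^2+n)/4$, which is asymptotically tighter than $2n^2$ but weaker than $2n^2-10n+15$ for small $n$. Closing this gap requires (i) separating the contribution of boundary lamps, each of which adds exactly $1$ to $\ANTube L$, from that of internal lamps; and (ii) tightening $\sum_J u_J$ by exploiting that $\rhfoot$, although its transitive closure is the lamp order $\le$, is typically strictly smaller than $\le$. A short case analysis by the number of boundary lamps of $L$ and by which internal lamps saturate $k_J=(3u_J+2)/2$ then extracts the explicit constant $2n^2-10n+15$ and finishes Part~(A).
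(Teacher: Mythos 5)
Your small cases are fine, and your Part (B) is correct but follows a different route from the paper: you invoke the Jordan--H\"older projectivity matching for semimodular lattices (every join-irreducible congruence is generated by a prime interval projective to one on a fixed maximal chain), whereas the paper gets $\len{L}=\ANTube{L}\ge|\Lamp L|=n$ for rectangular $L$ and transfers this to arbitrary slim semimodular lattices via the Gr\"atzer--Knapp extension \eqref{eq:kTkSMrtkr}. Your Part (A) set-up is also the paper's: pass to a rectangular lattice, shrink by Lemmas \ref{lemma:middllunt} and \ref{lemma:twnTbs} until the patterns $0\,0$ and $0\,u\,0$ are forbidden, and then count neon tubes; your per-lamp consequence $\NTube J\le(3u_J+2)/2$ is a correct (indeed slightly sharper) reading of the two forbidden patterns.

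The gap is in Part (A), in two places. First, the parenthetical claim that ``witnesses for different tubes of the same $J$ have feet in disjoint essential original territories, hence are distinct'' is false. Only the left parts $\GInt{\LEOT{\pp_1}},\dots,\GInt{\LEOT{\pp_{t}}}$ are pairwise disjoint, and likewise the right parts; but for two distinct tubes $\pp_1$ (left) and $\pp_2$ (right) of the same lamp $J$, the strip $\REOT{\pp_1}$ and the strip $\LEOT{\pp_2}$ can cross, and a single internal lamp $I$ with $\CircR I$ in the crossing uses both tubes. This is exactly why the paper asserts only that one lamp can use \emph{at most two} tubes of $J$. Hence the map from used tubes to witnessing lamps need not be injective, your bound $\sum_J u_J\le|\rhfoot|\le\binom{n}{2}$ is unjustified (only a $2$-to-$1$ version survives), and the intermediate estimate $(3n^2+n)/4$ falls with it. Second, and decisively, the inequality the theorem actually states, $\len L\le 2n^2-10n+15$, is never derived: your final paragraph concedes the shortfall and defers it to an unspecified ``short case analysis,'' with hints (e.g.\ that $\rhfoot$ is typically smaller than $\le$) that do not correspond to a carried-out argument and are not how the constant arises. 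The paper's computation is concrete: $\NTube J\le 2t_J^+\le 4\,|\ideal J\setminus\set J|$ for non-minimal internal $J$, so the internal tubes are bounded by $4$ times the number of comparabilities among the $k=n-m$ internal lamps, i.e.\ by $2k(k-1)$, corrected by $3s-2s^2\le 1$ for the $s$ minimal internal lamps (each of which has exactly one tube); adding the $m$ boundary lamps and maximizing the quadratic $m\mapsto m+2(n-m)^2-2(n-m)+1$ over $2\le m\le n-1$ (the antichain case of $\Jir D$ being settled separately by a grid) gives the value at $m=2$, namely $2n^2-10n+15$. Without this optimization over $m$ and the minimal-lamp correction --- or an equivalent computation --- Part (A) is not established.
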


\begin{proof} The case $n\leq 2$ is trivial. In the rest of the proof, let $n\geq 3$.
Let $L$ be a slim rectangular lattice.
A trivial induction by Lemmas \ref{lemma:nmtfrkstZ} and \ref{lemma:mfXzsrjsmG} shows that
\begin{equation}
\len L=\ANTube L=|\Mir L|.
\label{eq:kZmbTrMknHgl}
\end{equation}
Now if $\Con {L}\cong D$, then $\Lamp L\cong \Jir D$ by Lemma \ref{lemma:vltfLm},  so \eqref{eq:kZmbTrMknHgl}
gives that $\len L=\sum_{I\in\Lamp L}\NTube I\geq \sum_{I\in\Lamp L} 1=|\Lamp L|=n$.
Hence, Part (B) holds for the particular case of rectangular SPS lattices.

We know from  Gr\"atzer and Knapp \cite[Theorem 7]{GKnapp-III} \emph{and its proof} that
\begin{equation}\left.
\parbox{8cm}{each slim semimodular lattice $L'$ with at least three elements is a sublattice of  a  slim rectangular lattice $L$ such that $\Con L\cong\Con{L'}$ and $\len L=\len {L'}$.}\,\,\right\}
\label{eq:kTkSMrtkr}
\end{equation}
This statement also follows from Cz\'edli and Schmidt \cite[Lemma 21]{czgscht-visual} (applied in the reverse directions) and 
 Cz\'edli\cite[(Corner) Lemma 5.4]{CzG:repHom}.  Therefore, Part (B) follows from its particular case mentioned above.

Next, we turn our attention to part (A). We can assume that $\Jir D$ is not an antichain since otherwise with any grid $G$ of length $n$ and $L:=G$, we have that $\Con G\cong D$ and $\len G=n\leq 2n^2$.
Take a slim rectangular lattice $L$ of minimal length such that $\Con L\cong D$. We know from Lemma \ref{lemma:vltfLm} that $\Lamp L\cong \Jir{D}$, and so $|\Lamp L|=n$. Let $J\in \Lamp L$ be an internal lamp. Let $t^+_J$ 
denote the number of neon tubes of $J$ whose original territories are used. Similarly, $t^-_J$ stands for the number of neon tubes of $J$ whose original territories are not used; note that $t_J^+ + t_j^-=\NTube J$. Listing the neon tubes from left to right, let us write a letter $u$ for a used neon tube and a zero for an unused neon tube.
Then we obtain a sequence $\vec s$ of length $\NTube J$ consisting of $t_J^+$ $u$'s and $t_J^-$ zeros. Subsequences $0\,u\,0$ and $0\,0$ are forbidden by \eqref{eq:kZmbTrMknHgl} and Lemmas \ref{lemma:middllunt} and \ref{lemma:twnTbs} since $\len L$ is minimal. For another look at $\vec s$, take the sequence $\vec w:= \phantom u\star u \star u \star u\dots \star u\star u \star u\star\phantom u$
of $t_J^+$ $u$'s and $t_J^+ +1$ stars that alternate. 
We can  obtain $\vec s$ from $\vec w$ by removing some stars and replacing the remaining stars by zeros. Observe that only one zero can replace a star since $0\,0$ is a forbidden subsequence. Furthermore, for any two consecutive stars (which occur in a subsequence $\star\, u\, \star$), at most one of the two stars can change to $0$ and so the other one should be removed since $0\,u\,0$ cannot be a subsequence.  Hence, at most every second star can turn to 0 and the rest of the stars are removed. Therefore, the number $t_J^-$ of zeros is at most\footnote{Provided that $t_J^+ > 0$; this correction will be taken into account about seven lines after \eqref{eq:sWrhkkStDBnr}.} $\lceil (t_J^++1)/2\rceil$,
the upper integer part of $(t_J^++1)/2$. Since $\lceil (t_J^++1)/2\rceil\leq t_J^+$, we obtain that, for any $J\in\Lamp L$, 
\begin{equation}
\NTube J = t_J^+ + t_j^-  \leq   2\cdot t_J^+.
\label{eq:nsFlRnknFsTdrrsslH}
\end{equation}

Let $m$ denote the number of boundary lamps, that is, the number of maximal elements of $\Lamp L$ (or, equivalently, those of $\Jir D$). 
Each of $\LBnd L$ and $\RBnd L$ contains at least one boundary lamp, whence $m\geq 2$. 
Since $\Lamp L\cong \Jir D$ is not an antichain, $m<n$. So $k:=n-m$, the number of internal lamps of $L$, is at least 1.
If $\pp$ is a neon tube of an internal lamp $J$ and $I$ uses the original territory of $J$, then $I<J$ and, in particular, $I$ is also an internal lamp. Furthermore, if $\pp_1$,\dots, $\pp_{t_J^+}$ denote the neon tubes of $J$ whose original territories are used, then the $\GInt{\LEOT{\pp_1}}$, \dots, $\GInt{\LEOT{\pp_{t_J^+}}}$ are pairwise disjoint, and so are  $\GInt{\REOT{\pp_1}}$, \dots, $\GInt{\REOT{\pp_{t_J^+}}}$.  Therefore, using Lemma \ref{lemma:thtznjtVnPrm}(b), it follows that the lamp $I$ can use the original territories of at most two of the neon tubes of $J$.
The number of lamps $I$ that use the original territory of a neon tube of $J$ is at most
$|\ideal J\setminus\set J|$, whereby $J$ has at most $2\cdot |\ideal J\setminus\set J|$ 
neon tubes\footnote{For minimal lamps, this will be corrected soon.} whose original territories are used. By \eqref{eq:nsFlRnknFsTdrrsslH}, it has at most twice as many neon tubes all together. 
Hence, the total number of neon tubes of the internal lamps is at most\footnote{To  be improved  soon by taking the minimal internal lamps of $L$ into account.}
\begin{equation}
\sum_{\text{internal }J\in\Lamp L}2\cdot 2\cdot |\ideal J\setminus\set J| = 4\cdot \sum_{\text{internal }J\in\Lamp L} |\ideal J\setminus\set J|.
\label{eq:sWrhkkStDBnr}
\end{equation} 
Observe that $|\ideal J\setminus\set J|$ is the number of pairs $(I,I')$ of internal lamps subject to $I<I'$ and $I'=J$. Therefore, the second sum in \eqref{eq:sWrhkkStDBnr} is the number of pairs $(I,J)$ of internal lamps such that $I<J$. 
This sum reaches its maximum  when the internal lamps form a chain. Then there are ${k \choose 2}=k(k-1)/2$ such pairs, and so the maximum that \eqref{eq:sWrhkkStDBnr} can take is $2k(k-1)$; it \emph{might seem} to be an upper bound on the number $\INTube L$ of the neon tubes of the internal lamps of $L$.  

There are two imperfections with the argument above.  First, any two minimal internal lamps are incomparable. Hence, letting $s$ denote the number of minimal internal lamps, 
${k\choose 2} =k(k-1)/2$ has to be reduced by ${s\choose 2} =s(s-1)/2$. Second, instead of $2\cdot |\ideal J\setminus\set J|=0$, a minimal lamp $J$ has  exactly one neon tube (trivially or by Lemma \ref{lemma:middllunt}), whereby we $s\cdot 1=s$ has to be added. 
So we obtain that
\begin{align}
\INTube L&\leq 4\cdot\bigl(k (k-1)/2-s (s-1)/2\bigr) +s \cr
&= 2k^2-2k+3s-2s^2\leq'  2k^2-2k+1, 
\label{eq:wzMrkWf}
\end{align}
where ``$\leq'$'' holds since $3s-2s^2$ is negative for $s\geq 2$ and so we substituted 1 for $s$. 

Next, taking the $m$ boundary lamps,  $k=n-m$, and \eqref{eq:wzMrkWf} into account, 
\begin{align}
\ANTube L&=m+\INTube L  \cr
&\leq m + 2(n-m)^2-2(n-m)+1 \cr
&=2n^2-2n+1 +  2\cdot \underbrace{\Bigl(m^2- (2n-3/2)m\Bigr)}.
\label{eq:mNbnShjgtjmLmSg}
\end{align}
Let $f(m)=m^2- (2n-3/2)m$ denote the under-braced term. 
By the elementary theory of quadratic univariate real functions, $f(m)$  decreases in the closed interval $[0, n-3/4]$. This fact and  $2\leq m\leq n-1$ imply that  the largest value of $f(m)$ is $f(2)= 7-4n$. Substituting this value into \eqref{eq:mNbnShjgtjmLmSg}, we obtain that 
\begin{equation}
\ANTube L\leq 2n^2-10n + 15 < 2n^2.
\label{eq:psTsrNkzmRtcZl}
\end{equation}
Finally, \eqref{eq:kZmbTrMknHgl} and \eqref{eq:psTsrNkzmRtcZl} complete the proof of Theorem \ref{thm:est}.
\end{proof}

\begin{remark}\label{rem:rTsnkLbztsnlGpk}
The inequality \eqref{eq:MzlrGnvhPtBl} is not sharp. Indeed, no matter which $4$-element poset $\Jir D$ is, there is a slim rectangular lattice  $L$ such that $|\Jir{\Con L}|\cong D$ and $\len L\leq 5$ while
$2n^2-10n+15$ for $n:=4$ is 7. Note that ``$\leq 5$'' is sharp for $n=4$; to see this, let $\Jir D$ be the 4-element poset with the  ``Y-shaped diagram''.
\end{remark}

\begin{corollary}\label{cor:hcGprltKsJtkb}
For $L$ in Part \textup{(A)} of Theorem \ref{thm:est},  $|L|\leq (2n^2-10n+15)^2<4n^4.$
\end{corollary}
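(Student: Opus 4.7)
The plan is to establish $|L|\leq \len L^{\,2}$ and then combine with the length bound from Theorem~\ref{thm:est}(A). For the first step, I would consider the map $\phi\colon L\to \pideal L{\lcorner L}\times \pideal L{\rcorner L}$ defined by $x\mapsto(\lsupp x,\rsupp x)$. By the identity $x=\lsupp x\vee\rsupp x$ in Definition~\ref{def:mndLlspSgrnKpL}(C), the map $\phi$ is injective, and so $|L|\leq |\pideal L{\lcorner L}|\cdot |\pideal L{\rcorner L}|$. Both factors are, again by Definition~\ref{def:mndLlspSgrnKpL}(C), the lower left and lower right boundaries, hence chains lying on $\LBnd L$ and $\RBnd L$ respectively. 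Each boundary chain has length $\len L$; moreover, since $\lcorner L$ is doubly irreducible, $\lcorner L\neq 1$, so $\LBnd L$ extends strictly above $\lcorner L$. Consequently, $\pideal L{\lcorner L}$ has length at most $\len L - 1$ and therefore at most $\len L$ elements. The symmetric argument yields $|\pideal L{\rcorner L}|\leq \len L$, and hence $|L|\leq \len L^{\,2}$.

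For the second step, I would invoke Theorem~\ref{thm:est}(A), which gives $\len L\leq 2n^2-10n+15$, to conclude $|L|\leq (2n^2-10n+15)^2$. The strict bound $(2n^2-10n+15)^2<4n^4$ reduces to $2n^2-10n+15<2n^2$, i.e.\ $n>3/2$, which holds since $n\geq 3$. I foresee no real obstacle; the only mild subtlety is that the argument needs the chain-length estimate $\len L-1$ (not merely $\len L$) on each lower boundary, and double irreducibility of the corners is precisely what supplies this slack.
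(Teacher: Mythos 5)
Your argument is correct, and it takes a route that differs from the paper's in its key ingredient. The paper proves the intermediate bound $|L|\leq \len L^2$ via slimness in the sense of Definition~\ref{def:sZbjmRgRpbW}(B): it writes $\Jir L=C\cup U$ for two chains $C,U$, notes $0,1\notin C$ and $0,1\notin U$ (the exclusion of $1$ using rectangularity, reached via \eqref{eq:kTkSMrtkr}), so $|C|,|U|\leq \len L-1$, and then counts every nonzero element as a join $c\vee u$, getting $|L|\leq 1+(\len L-1)^2\leq \len L^2$. You instead use the rectangularity decomposition of Definition~\ref{def:mndLlspSgrnKpL}(C): the injection $x\mapsto(\lsupp x,\rsupp x)$ into the product of the two lower boundary chains $\pideal L{\lcorner L}\times\pideal L{\rcorner L}$, with $x=\lsupp x\vee\rsupp x$ guaranteeing injectivity, and the double irreducibility of the corners (hence $\lcorner L\neq 1\neq\rcorner L$) giving $|\pideal L{\lcorner L}|,|\pideal L{\rcorner L}|\leq\len L$. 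Your version needs no appeal to \eqref{eq:kTkSMrtkr} since the $L$ of Part~(A) is already rectangular, and it sidesteps the (mildly delicate) claim that every nonzero element is a join of one element from each chain of join-irreducibles; what it gives up is only the marginally sharper constant $1+(\len L-1)^2$, which is irrelevant for the stated bound. The final numerical step is the same in both: $\len L\leq 2n^2-10n+15$ from Theorem~\ref{thm:est}(A), and $(2n^2-10n+15)^2<4n^4$ because $0<2n^2-10n+15<2n^2$ for $n\geq 3$.
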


\begin{proof} By \eqref{eq:kTkSMrtkr} and Theorem \ref{thm:est}, it suffices to show that if $L$ is a slim rectangular lattice  of length $k$, then $|L|\leq k^2$. 
By  \eqref{eq:sZbjmRgRpbW}, there are chains $C,U\subseteq \Jir L$ such that $\Jir L=C\cup U$. Since $0\notin C$ and, by rectangularity, $1\notin C$,   $|C|\leq k-1$. Similarly, $|U|\leq k-1$. Since any element of $L\setminus\set 0$ is of the form $c\vee u$ with $c\in C$ and $u\in U$, $L$ has at most $1+|C|\cdot|U|=1+(k-1)^2\leq k^2$ elements, completing the proof.
\end{proof}

\section{Odds and ends}
Let $P$ be a poset, and let $j\in P$. We define a new poset $P'$ as follows. The base set of $P'$ is $(P\setminus\set j)\cup \set{j',j''}$ where $P\cap\set{j',j''}=\emptyset$. The ordering in $P'$ is defined as follows: 
for $a,b\in P'\setminus\set{j',j''}=P\setminus\set j$,  $a\leq_{P'} b\iff a\leq_P b$, $a\leq_{P'} j'\iff a\leq_{P'} j''\iff  a \leq_{P} j$, \ $j'\leq_{P'}b\iff j''\leq_{P'}b\iff j\leq_P b$, and $j''\prec_{P'} j'$.
We say that $P'$ is obtained from $P$ by \emph{doubling the element} $j$ of $P$. For an example, see $P$ and $P'$ in the middle of Figure \ref{fig-ddb2}.

\begin{proposition}\label{prop-dbl} 
Let   $P'$ be a poset obtained from a JConSPS-representable poset $P$ by doubling a non-maximal element $j\in P$. Then  $P'$
is also JConSPS-representable.
Furthermore, if $L$ is a slim rectangular lattice such that 
$P\cong\Jir{\Con L}$, then there is a slim rectangular lattice $L'$ such that 
$P'\cong\Jir{\Con{L'}}$ and $\len{L'}=\len L +2$.
\end{proposition}

\begin{figure}[ht] \centerline{ \includegraphics[scale=0.98]{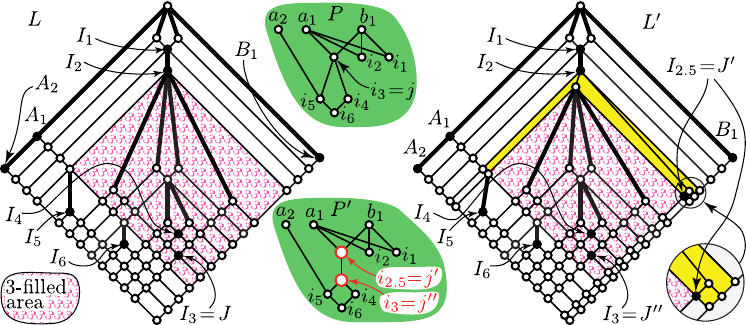}} \caption{The construction for Proposition \ref{prop-dbl} 
with a ``magnifying glass'' at the bottom right}\label{fig-ddb2}
\end{figure}

Cz\'edli \cite[Corollary 3.5]{CzGlamps} shows that 
if we double a \emph{maximal} element of a JConSPS-representable poset $P$, then the new poset $P'$ is never JConSPS-representable.

\begin{proof}[Proof of  Proposition \ref{prop-dbl}] By Gr\"atzer and Knapp's result, see \eqref{eq:kTkSMrtkr}, it suffices to deal with the second half of the statement. Assume that $L$ is a rectangular lattice. 
For $m\in\Nplu$, the $m$-th neon tube of a lamp $I$ is understood as the $m$-th neon tube of $I$ from the left; see Convention \ref{conv:tbdD}.
We also count on the fixed multifork sequence of $L$, see Lemmas \ref{lemma:nmtfrkstZ} and \ref{lemma:mfXzsrjsmG}. We know from Lemma \ref{lemma:vltfLm} that there is an order isomorphism $P\to \Lamp L$; we denote its action by capitalization, that is, $x\mapsto X$. The notation used in Lemma \ref{lemma:mfXzsrjsmG} is in effect. Since $j$ is not a maximal element of $P$,   $J$ is an internal lamp; let, say, $J=I_t$. In Figures \ref{fig-ddb2} and\footnote{Apart from scaling, the two figures are the same. Figure  \ref{fig-ddb2} illustrates the idea of the construction better while Figure \ref{fig-ddb1} is more readable.} \ref{fig-ddb1}, $t=3$. 
Note that $P\cap P'=P\setminus\set{j}=P\setminus\set{j',j''}$ is a subposet both in $P$ and in $P'$. 
For any  $x\in P\cap P'$,  the lamp corresponding to $x$
will be denoted by $X$ both in $L$ and in $L'$; this should not cause confusion since it will be clear from the context whether $X\in \Lamp L$ or $X\in \Lamp{L'}$. 
The pair $(\Foot X,\Peak X)$ is the same in $L'$ as in $L$. So, implicitly, the proof mostly  considers lamps as pairs.

We define $L'$ in the following way.
Let $\epsilon\in\RR$, $\epsilon>0$, be the smallest one out of the geometric lengths of the edges of (the fixed \tbdia-diagram of) $L$.  
With reference to the multifork sequence of $L$, let 
$L_0':=L_0$, $L_1':=L_1$,\dots, $L'_{t-1}:=L_{t-1}$; these equations also mean the exact coincidence of the corresponding \tbdia-diagrams in the plane. As for the forthcoming notation, we will continue the sequence by $L'_{t-0.5}$, $L'_t$, $L'_{t+1}$, \dots, $L'_k=:L'$.
In $L'_{t-1}$ (which is the  same as $L_{t-1}$), let  $H'_{t-0.5}$ be the same 4-cell (even geometrically the same) as $H_{t}$ in $L_{t-1}$. 
 
Later, $H_t$ turns into $\CircR{I_t}$ in $L$; in the figure, $\CircR{I_t}=\CircR{I_3}$ is the ``3-filled'' area in $L$. In $L'$, only the ``major part'' of $\CircR{I'_{t-0.5}}=\CircR{I'_{2.5}}$ is 3-filled; the rest of $\CircR{I'_{t-0.5}}=\CircR{I'_{2.5}}$ is yellow-filled.
At $H_t$ in $L_{t-1}$, we perform a $\NTube{I_t}$-fold multifork extension, which produces $J=I_t$. 
(In the figure, where $I_t=I_3=J$, $\NTube{I_t}=4$.) However, in $L'_{t-1}$, we add a 2-fold multifork at $H'_{t-0.5}$ to obtain a new lattice $L'_{t-0.5}$. Geometrically (in the \tbdia-diagram), this  new multifork extension and the lamp $J'=I_{t-0.5}$ it produces look unusual compared to other figures.   Namely, we require that the 4-cell  $H'_{t}$ whose peak is the foot of the leftmost neon tube of $J'$ should be almost as large as $H'_{t-0.5}$. That is, the width $\eta$ of the ``legs'' of the $\Lambda$-shaped difference $H'_{t-0.5}\setminus H'_{t}$, which is yellow-filled in the figure, should be very small. (We may think of $\eta=\epsilon/1000$.) On the right of  the Figure, $H'_t=H'_3$ in $L'$ is 3-filled. 

Next, we perform a $\NTube{I_t}$-fold multifork extension at $H'_t$ to obtain $L'_t$ from $L_{t-0.5}$ and to produce the lamp $J''=I_t$ of $L'_t$ (and of $L'$). The feet of the neon tubes of $J''=I_t$ in $L'_t$ (and in $L'$) should be the same geometric points as the feet of the neon tubes of $J=I_t$ in $L_t$ (and in $L$). So the geometric shape of $J$  and that of $J''$ are almost the same (and they tend to be the same as $\eta$ tends to 0).

From $L'_t$, we continue the multifork sequence for $L'$ in the same way as we continue the sequence from $L_t$ to reach $L$. Even in geometric sense, we do almost the same, that is, with very little differences that would diminish if we formed the limit at $\eta\to 0$.
To be more specific, let us agree that we use the alternative notation $I_{-1}=A_1$, $I_{-2}=B_1$, $I_{-3}=A_2$, $I_{-4}=B_2$, \dots{}, $I_{-2k+1}=A_k$, $I_{-2k}=B_k$, \dots{} for the boundary lamps. (The purpose of this notation is that now each lamp is of the form $I_m$ for some $m\in\mathbb R$.)
For $s=t, t+1,\dots,k-1$, 
we select $H'_{s+1}$ as follows. In $L_{s}$, the trajectory through the top left edge of the 4-cell $H_{s+1}$ contains exactly one neon tube, $\pp$. Since the top left edge of $H_{s+1}$ is of slope $(1,1)$, it is in the descending part of the trajectory. 
The neon tube $\pp$ belongs to exactly one lamp, which is older than or as old as $I_s$; let $I_u$ denote this lamp.  Note that we never use the trajectory through the leftmost neon tube of $I_{t-0.5}$ (in the figure, the ``narrow'' trajectory through the yellow-filled area), whereby $u\neq t-0.5$ and so $u$ is an integer and $I_u$ will also make sense in $L'$, not only in $L$. 

Among the neon tubes of $I_u$, let $\pp$ be the $\alpha$-th neon tube (from the left).  In $L'_t$, let $\pp'$ be the $\alpha$-th neon tube of $I_u$.    By left-right symmetry, the top right edge of $H_{s+1}$ defines a neon tube $\qq$ of a lamp $I_v$ in $L_s$ and its counterpart $\qq'$ in $L'_s$. The top right edge of $H_{s+1}$ is in the ascending part of the trajectory in question.
Now we can simply select $H'_{s+1}$ as the unique 4-cell of $L'_s$ where the descending part of  the  trajectory through $\pp'$ and the ascending part of the trajectory through $\qq'$ cross each other\footnote{The possible doubts whether they cross will be dissolved later.}. Once $H'_{s+1}$ has been selected, we perform a $\NTube{I_{s+1}}$-fold multifork extension at this 4-cell of $L'_s$  to obtain $L'_{s+1}$ and its lamp $I_{s+1}$. 
This multifork extension should  almost be the same geometrically as in the passage from $L_s$ to $L_{s+1}$; in particular, the feet of the new neon tubes have to be geometrically the same in $L'_{s+1}$ as in $L_{s+1}$. For later reference, note that 
\begin{equation}\left.
\parbox{8.5cm}
{the left upper edge of $\CircR{I_{s+1}}=H_{s+1}$ belongs to the trajectory through  a neon tube of $I_u$ both in $L$ an $L'$, and similarly for the right upper edge and $I_v$.}\,\,\right\}
\label{eq:nmCsmDrglfvZbtD}
\end{equation}
Finally, we obtain $L'=L'_k$.

Next, in order to  recall Cz\'edli \cite[Lemma 7.5]{CzG-qdiagr}, we need some notation. Let $U$ be an internal lamp of a   slim rectangular lattice $K$. Then the top edge of the trajectory containing the upper left edge of $\CircR U$ is a neon tube of a lamp; we denote this lamp by $\Nwl U$. Left-right symmetrically, $\Nel U$ stands for the unique lamp that has a neon tube whose trajectory contains the upper right edge of $\CircR U$.  For a poset $Q$, let $\Min Q$ stand for the set of minimal elements of $Q$. Now \cite[Lemma 7.5]{CzG-qdiagr} asserts that  if $K$ is a slim rectangular lattice and $U,V\in\Lamp K$, then
\begin{equation}\left.
\parbox{7.9cm}{$U\prec V$ in $\Lamp K$ if and only if $U$ is an internal lamp and $V\in \Min{\set{\Nwl U,\Nel U}}$.}\,\,\right\}
\label{eq:tjFlTjMZvsBl}
\end{equation}
Comparing \eqref{eq:nmCsmDrglfvZbtD} and  \eqref{eq:tjFlTjMZvsBl} and taking into account that only internal lamps, which all occur in \eqref{eq:nmCsmDrglfvZbtD}, can be covered by another lamp, 
the construction implies 
that $\Lamp L \setminus \set J$ is order isomorphic to $\Lamp{L'}\setminus\set{J',J''}$. We obtain from Lemma \ref{lemma:vltfLm} that $J'<J''$ in $\Lamp{L'}$, $\Lamp L\cong \Lamp{L'}\setminus\set{J'}$, and $\Lamp L\cong \Lamp{L'}\setminus\set{J''}$. Thus, using that $P\cong\Lamp{L}$, we conclude that $P'\cong\Lamp{L'}$, as required. Furthermore, the  construction and \eqref{eq:kZmbTrMknHgl} yield that $\len{L'}=\len L+2$.

However, the proof is not complete yet. Indeed, we need to show that the trajectories mentioned earlier do cross in $L'_{s}$. To be more precise, we need to show that if the geometric areas
$\REOT\pp$ and $\LEOT\qq$ cross in $L_s$, than so do $\REOT{\pp'}$ and $\LEOT{\qq'}$ in $L'_s$. Of course, $\REOT{\pp'}$ and $\LEOT{\qq'}$ are perpendicular if we disregard their thickness but, in principle, they could avoid each other like the right leg of the upper $\pmb{\wedge}$ and the left leg of the lower $\pmb\wedge$
do in 
\begin{equation}
\text{\raisebox{8pt}{$\pmb{\bigwedge}$}\kern-11pt $\pmb{\bigwedge}$}\,\,\,\, .
\label{eq:flRjJnLsFplWv}
\end{equation}
Fortunately, it is clear by continuity that whenever $\eta$ is small enough (compared to $\epsilon$), then $\REOT{\pp'}$ and $\LEOT{\qq'}$ are close enough to $\REOT{\pp}$ and $\LEOT{\qq}$, respectively. Thus, since $\REOT{\pp}$ and $\LEOT{\qq}$ cross each other at a rectangle with sides at least $\epsilon$, $\REOT{\pp'}\cap\LEOT{\qq'}$ is a rectangle of a positive area. Furthermore, in $L_s$, 
$\REOT{\pp}\cap\LEOT{\qq}$ is a 4-cell. Since, except when $J''=I_t$ was created, $\OT{J'}=\OT{I_{t-0.5}}$ is never used, we conclude that $\REOT{\pp'}\cap\LEOT{\qq'}$ is also a 4-cell. This shows that the definition of $L'_{s+1}$ and that of $L'$ make sense, completing the proof of Proposition \ref{prop-dbl}.
\end{proof}

\begin{figure}[ht] \centerline{ \includegraphics[scale=0.98]{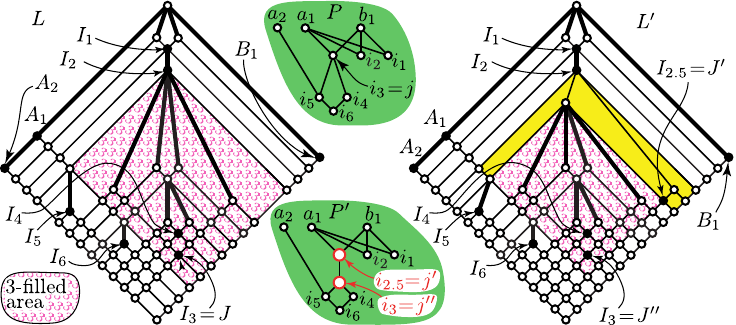}} \caption{The construction for Proposition \ref{prop-dbl}, rescaled}\label{fig-ddb1}
\end{figure}

\begin{remark}\label{rem:sldnTdfjblnS} 
In most of the cases, the estimate given in \eqref{eq:MzlrGnvhPtBl} of Theorem \ref{thm:est} is far from being optimal. For example, if $\Jir{\Con{L'}}\cong \Jir D\cong P'$ and $P'$ is obtained from a smaller poset $P$ by doubling a
non-maximal element $j\in P$, then, with the notation of  Proposition \ref{prop-dbl}, the lamp $J'$ corresponding to $j'\in P'$ has only two neon tubes and contributes to $\len{L'}$ by 2 regardless the size of $\pideal{\Lamp{L'}}{J'}$. 

To present another example, let $4\leq n\in \Nplu$ and let $P_n$ be the $n$-element poset consisting of two maximal elements, $a$ and $b$, $n-3$ minimal elements, $c_1$, \dots, $c_{n-3}$, and an element $u$ such that $u\prec a$, $u\prec b$, and $c_i\prec u$ for all $i\in\set{1,\dots,n-3}$.
 Then there is a slim rectangular lattice $L$ such that $\Jir{\Con L}\cong P_n$ and 
$\len L=n+1$, which is much smaller than what the estimate  \eqref{eq:MzlrGnvhPtBl} gives. 

In our third example, $3\leq n\in\Nplu$ and $Q_n$ is the poset with two maximal elements and $n-2$ minimal elements such that every minimal element is covered by both maximal elements. Then there is a slim rectangular lattice $L$ such that $\Jir{\Con L}\cong Q_n$ and $\len L=n$. This example shows that  the lower estimate given in Theorem \ref{thm:est}(B) cannot be improved.
\end{remark}
As Remarks \ref{rem:rTsnkLbztsnlGpk} and \ref{rem:sldnTdfjblnS} allow us to guess,  there are many factors that can reduce the number $\len L=|\ANTube L|$ and improve the estimate \eqref{eq:MzlrGnvhPtBl}. However, it seems to be difficult to take more factors into account without making Theorem \ref{thm:est} and the corresponding proof too complicated. 
Corollary \ref{cor:hcGprltKsJtkb} is not sharp either. Indeed, in addition to that 
this corollary is built on the non-sharp Theorem \ref{thm:est}, there is another reason for  this. Namely, if $\Jir D\cong \Jir{\Con L}$ has few non-maximal elements (in particular, if $\Jir D$ is an antichain and so $D$ is Boolean), then $|L|$ has few internal lamps and $|L|$ is close to $\len L^2$ but then $\len L$ is much smaller than what \eqref{eq:MzlrGnvhPtBl} gives.
On the other hand, if $\Jir D$ has many non-maximal elements, then $L$ has many internal lamps and  $|L|$ is considerably smaller than $\len L^2$.

\begin{remark}\label{rem:rjmdnMgzKrrGv} 
In order to decide whether a given $n$-element poset $P$  is JConSPS-representable, it is not economic and usually not even feasible to list all slim rectangular lattices of lengths at most $2n^2-10n+15$; see  \eqref{eq:gNsztdLnkGsllSskl} and  \eqref{eq:MzlrGnvhPtBl}, or those
 of size at most $(2n^2-10n+15)^2$; see Corollary \ref{cor:hcGprltKsJtkb}. It is much faster to rely on the known properties and constructions.
To \emph{exclude} the JConSPS-representability of $P$ in many cases, we can
check the \emph{known properties} of JConSPS-representable posets, see \eqref{eq:kTkSMrtkr},  Cz\'edli \cite{CzGlamps}, \cite{CzG-DCElamps}, and Cz\'edli and Gr\"atzer \cite{CzGGG3p3c} (where two earlier properties  from Gr\"atzer \cite{gG-cong-fork-ext} and \cite{gG-VIII-conSPS} are also recalled). 
To \emph{conclude} the JConSPS-representability of $P$ and \emph{to obtain} a slim rectangular lattice $L$ such that $P\cong\Jir{\Con L}$, we can often use the \emph{known constructions};  see Proposition \ref{prop-dbl},  Cz\'edli \cite[Theorems 3.14 and 3.16]{CzG-qdiagr}, and Cz\'edli and Gr\"atzer \cite[Theorem 1.2]{CzGGG3p3c}.  
If the known properties and constructions do not help, then,  compared to what \eqref{thm:est}  gives,
the ideas in their proofs 
\emph{radically reduce} the number of cases to be inspected for the given $P$.
\end{remark}

If $|P|$ is a small poset, then Remark \ref{rem:rjmdnMgzKrrGv} offers a way to decide, in few hours without computers, whether $P$ is JConSPS-representable. 
(We feel but have not checked that every at most 6-element poset is small in this aspect.)
Note that by Cz\'edli \cite[Corollary 3.11]{CzG-qdiagr}, 
 each finite poset $P$ that is not JConSPS-representable  gives a property (but not always a new property) of JConSPS-representable posets.

\end{document}